\newtheorem{theorem}{Theorem}
\newtheorem{lemma}{Lemma}
\def\showauthors@on{T}
\title{Physics-Informed Tailored Finite Point Operator Network for Parametric Interface Problems}
\author{
    %Authors
    % All authors must be in the same font size and format.
    Ting Du\textsuperscript{\rm 1},
    Xianliang Xu\textsuperscript{\rm 1},
    Wang Kong\textsuperscript{\rm 2},
    Ye Li\textsuperscript{\rm 2}\thanks{Corresponding author},
    Zhongyi Huang\textsuperscript{\rm 1}\footnotemark[1]
}
\title{My Publication Title --- Single Author}
\author {
    Author Name
}
\title{My Publication Title --- Multiple Authors}
\author {
    % Authors
    First Author Name\textsuperscript{\rm 1},
    Second Author Name\textsuperscript{\rm 2},
    Third Author Name\textsuperscript{\rm 1}
}
\begin{document}

\maketitle

\begin{abstract}
Learning operators for parametric partial differential equations (PDEs) using neural networks has gained significant attention in recent years. However, standard approaches like Deep Operator Networks (DeepONets) require extensive labeled data, and physics-informed DeepONets encounter training challenges. 
In this paper, we introduce a novel \textbf{p}hysics-\textbf{i}nformed \textbf{t}ailored \textbf{f}inite \textbf{p}oint \textbf{o}perator \textbf{net}work (PI-TFPONet) method to solve parametric interface problems without the need for labeled data. Our method fully leverages the prior physical information of the problem, eliminating the need to include the PDE residual in the loss function, thereby avoiding training challenges. The PI-TFPONet is specifically designed to address certain properties of the problem, allowing us to naturally obtain an approximate solution that closely matches the exact solution. Our method is theoretically proven to converge if the local mesh size is sufficiently small and the training loss is minimized. Notably, our approach is uniformly convergent for singularly perturbed interface problems.
Extensive numerical studies show that our unsupervised PI-TFPONet is comparable to or outperforms existing state-of-the-art supervised deep operator networks in terms of accuracy and versatility.

\end{abstract}

\section{Introduction}

Solving parametric partial differential equations (PDEs) through deep learning has attracted extensive attention recently. Thanks to the universal approximation theorem for operators \cite{chen1995universal,lu2021learning}, neural networks can approximate solutions to PDEs. Several operator neural networks, such as the deep operator network (DeepONet) \cite{lu2021learning}, the Fourier neural operator (FNO) \cite{li2020fourier}, and their variants, have been developed to solve parametric PDEs. The physics-informed versions of these methods offer the advantage of requiring no additional labeled data. However, they can encounter training failures when using physics-informed loss functions \cite{wang2022PINNfail,wang2022improved}. Most successful applications focus on PDEs with smooth solutions, while less attention has been given to problems with non-smooth solutions or piece-wise smooth solutions. A typical example is the elliptic interface problem, where the solution and its derivatives exhibit jump discontinuities across the interface, as shown in Fig.~\ref{fig_interface_domain}.

\begin{figure}[h]
	\centering
	\includegraphics[width=0.8\columnwidth]{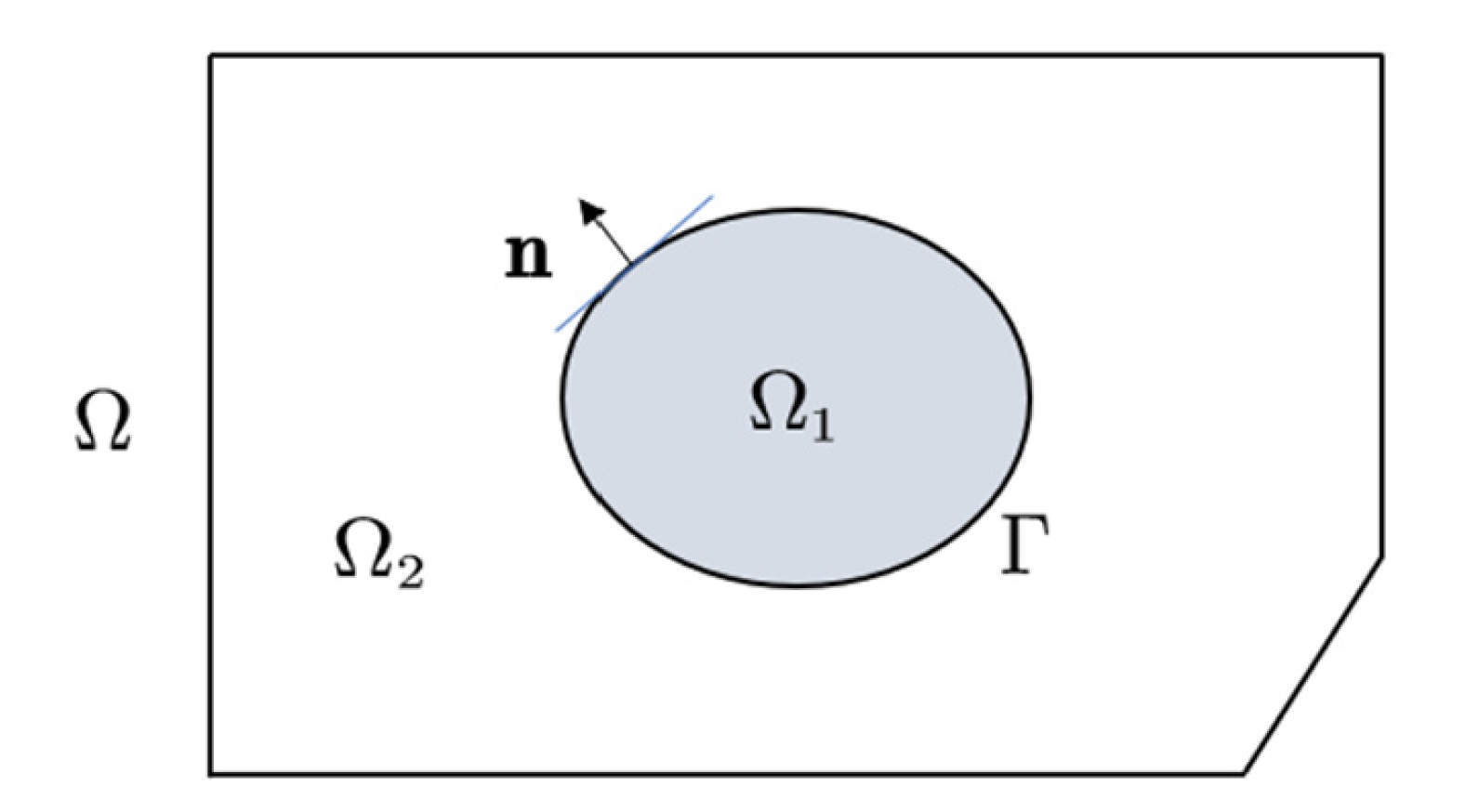}
	\caption{A sketch of the domain $\Omega$ and interface $\Gamma$ from \cite{wu2024solving}. Here $\Gamma$ divide $\Omega$ into two disjoint subdomains $\Omega_1,\Omega_2$.}
	\label{fig_interface_domain}
\end{figure}

Elliptic interface problems have widespread applications across various fields, including fluid mechanics \cite{fadlun2000combined,sussman1999efficient}, materials science \cite{LIU2020109017,WANG2019117}, electromagnetics \cite{hesthaven2003high}, and biomimetics \cite{ji2018finite}. 
The rapid and accurate simulation of these differential equations are critical in both scientific research and engineering applications \cite{azizzadenesheli2024neural}.
One difficulty of the interface problems is that the low global regularity of the solution and the irregular geometry of the interface bring additional challenges, manifesting singularities at interfaces \cite{babuvska1970finite,huang2009tailored,kellogg1971singularities}. 
The variable nature of singularities and the complex geometries at interfaces render standard neural operators less effective for achieving precision.
Although there are some studies like IONet \cite{wu2024solving} using different DeepONets for interface problems on different subdomains, research in solving parametric interface problems is rather limited compared to the fruitful works in the application of neural operators.

\subsection{Contributions}

In this paper, we propose a novel \textbf{p}hysics-\textbf{i}nformed \textbf{t}ailored \textbf{f}inite \textbf{p}oint \textbf{o}perator \textbf{net}work (PI-TFPONet) method to solve parametric interface problems. Our technique leverages prior local physical information about the interface problem \cite{huang2009tailored,wu2024solving}. Mathematical analysis shows that the solution in a small local region can be effectively approximated by a linear combination of local basis functions. Instead of learning the solution directly, we learn the coefficients of these local basis functions, then reconstruct the solution naturally. This approach avoids the training difficulties encountered in physics-informed DeepONets. Using this prior physical information, we can train the model and reconstruct the neural network solution without any additional training data. Below is a summary of our paper's contributions:

\begin{itemize}
\item We propose a novel unsupervised PI-TFPONet method to solve parametric interface problems. Our models can be trained on coarse grids while delivering accurate predictions for various functions and finer locations.
\item We provide a theoretical error estimation for our PI-TFPONet method, showing that the error converges to zero as long as the collocation points are sufficient and the training loss is minimized.
\item We demonstrate the accuracy and efficiency of our method through extensive numerical results. Our models achieve comparable or superior accuracy to supervised models without requiring large amounts of labeled training data, multiple networks, or the training difficulties faced by existing unsupervised methods.
\end{itemize}

\subsection{Related Works}
\paragraph{Data-based PDE solver.}
The standard deep operator network (DeepONet) \cite{lu2021learning}, Fourier neural operator (FNO) \cite{li2020fourier} are both data-based supervised methods, with both theoretical convergence guarantees \cite{deng2022approximation,kovachki2021universal,lanthaler2022error} and  diverse applications \cite{shukla2024deep,haghighat2024deeponet,lu2022comprehensive,li2024geometry}.
However, they often demands an extensive dataset to enhance predictive performance.

\paragraph{Physics-informed PDE solver.}
Physics-informed machine learning can integrate seamlessly data and physical information (e.g. PDEs) to train neural network models.
Typical methods include physics-informed neural networks \cite{raissi2019pinn}, the Deep Ritz Method \cite{yu2018deep}, and the Deep Galerkin Method \cite{sirignano2018dgm} and their variants.
The integration of physics information is further combined in neural operators, such as the physics-informed DeepONet \cite{wang2021learning,goswami2023physics} and physics-informed FNO \cite{li2024physics,de2022generic}.
However, these methods integrate the residual of the differential equation into the loss function, which brings additional training difficulties, especially for solutions with multiscale phenomenon or discontinuity \cite{wang2022PINNfail,li2023implicit}.
Direct application of these methods may lead to poor performance for parametric interface problems.

\paragraph{Interface problem solver.}
For the piece-wise continuity property of the interface problems, piece-wise networks have been employed to approximate solutions by assigning a network to each subdomain and using jump conditions as penalty terms in the loss function \cite{guo2021deep,he2022mesh}. 
Despite their effectiveness, these methods can be complicated by the need to balance various loss terms during training. 
Domain decomposition methods have been adapted with neural networks to ease these challenges \cite{li2019d3m,li2020deep}. 
Additionally, approaches like the discontinuity capturing shallow neural network (DCSNN) \cite{hu2022discontinuity} and deep Nitsche-type method
\cite{wang2020mesh} have been proposed to manage high-contrast discontinuous coefficients and complex boundary conditions. 
However, these methods are confined to solving a particular interface equation.
Research in solving parametric interface problems is rather limited with some studies like IONet and physics-informed IONet \cite{wu2024solving} for piece-wise smooth solutions.
However, the need for large amount of labeled data and the training difficulties for physics-informed loss still exists.
Besides, if the interface problem exhibits singular perturbations, it will pose greater challenges to finding a solution.

\section{Preliminary}
\subsection{Interface Problems}
In this paper, we develop physics-informed machine learning methods to solve the following parametric interface problems:
\begin{equation}\label{eq3-1}
	\left\{\begin{aligned}
		&-\nabla\cdot(a\nabla u)+bu=f,\ \text{in }\Omega/ \Gamma,\\
		&[u]|_{\Gamma}=g_D,\ [a\nabla u\cdot\bm{n}]|_{\Gamma}=g_N,\ u|_{\partial\Omega}=0,
	\end{aligned}
	\right.
\end{equation}
where $\Omega$ is the domain comprising $N$ subdomains $\{\Omega_i, i=1,\dots,N\}$ separated by the interface $\Gamma$ as in Fig.~\ref{fig_interface_domain}. Here, $a(\bm{x}) > 0$ and $b(\bm{x}) \geq 0$ are piece-wise smooth functions. The terms $[\cdot]_{\Gamma}$ represents the jump across the interface, i.e., for a point $\bm{x}_0 \in \Gamma$ between $\Omega_i$ and $\Omega_{i+1}$,
\begin{equation}
    \begin{aligned}
        \left[u\right]|_{\bm{x}_0} &=u(\bm{x}_0^+)-u(\bm{x}_0^-) \\
&=\lim_{\bm{x}\in\Omega_{i+1},\bm{x}\to\bm{x}_0}u(\bm{x})-\lim_{\bm{x}\in\Omega_i,\bm{x}\to\bm{x}_0}u(\bm{x}).
    \end{aligned}\label{equ:jump definition}
\end{equation}

Our goal is to find the solution $u(x)$ quickly for different source function $f(x)$ (or the coefficient function $a(x)$, $b(x)$, jump/boundary conditions). Mathematically, we need to learn the mapping $\mathcal{G}:f(x) \rightarrow u(x)$.
Existing works shows it pose significant challenges due to the discontinuity of the solutions $u(x)$, particularly when they involve singular perturbations or high-contrast coefficients, resulting in intricate singularities that complicate resolution.

\subsection{Prior Physical information}
The interface equation \eqref{eq3-1} can be simplified using the coordinate transformation $\bm{y}(\bm{x}) = \int_{\cdot}^{\bm{x}} 1/a(\bm{\xi})d\bm{\xi}$, where the lower bound of the integral is determined by $\Omega$. The simplified form is:
\begin{equation}\label{eq3-2}
	\left\{\begin{aligned}
		&-\Delta u(\bm{y}) + c(\bm{y})u(\bm{y}) = F(\bm{y}),\ \text{in }\Omega/ \Gamma,\\
		&[u]|_{\Gamma}=g_D,\ [\nabla u\cdot\bm{n}]|_{\Gamma}=g_N,\ u|_{\partial\Omega}=0,
	\end{aligned}
	\right.
\end{equation}
where $c(\bm{y}) = a(\bm{x}(\bm{y}))b(\bm{x}(\bm{y}))$ represents a transformed coefficient and $F(\bm{y}) = a(\bm{x}(\bm{y}))f(\bm{x}(\bm{y}))$ is the transformed source term. 
We first show the prior physical information based on this reformulated equation in one dimension, then extend it to two dimension and above.

\begin{figure*}
    \centering
    \includegraphics[width=0.8\linewidth]{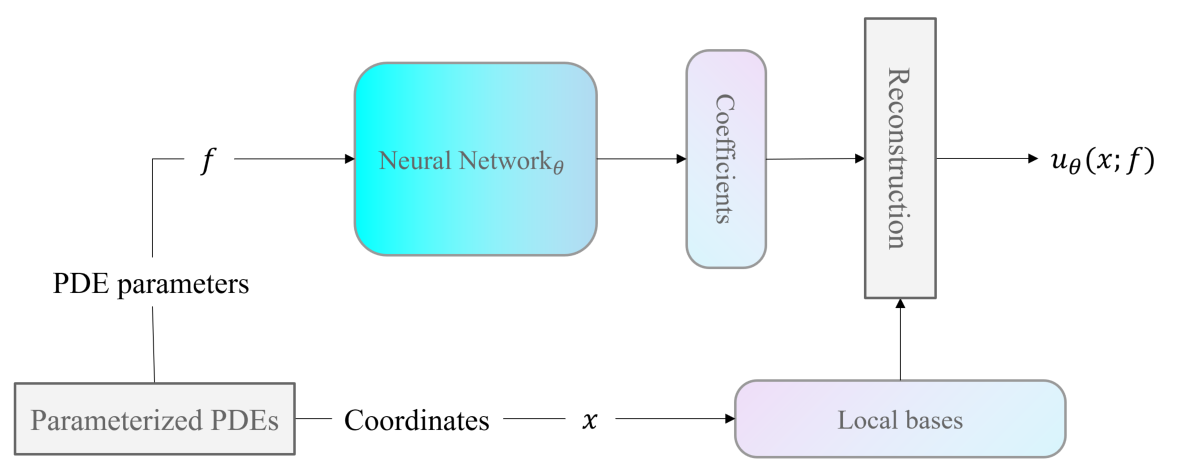}
    \caption{Architecture of PI-TFPNet.}
    \label{fig:architecture}
\end{figure*}

\paragraph{One dimension.} 
In a one-dimensional setting with a partition $\widetilde{\Gamma}=\{y_j\}$, the solution to Eq.~\eqref{eq3-2} within each sub-interval $(y_{j-1}, y_j)$ can be approximated by:
\begin{equation}\label{eq3-3}
u_h(y) = \alpha_j A_1^j(y) + \beta_j A_2^j(y) + \int_{y_{j-1}}^{y_j} F(s)G_j(y(x),s)ds,
\end{equation}
where $G_j$ is the Green's function associated with the sub-interval. The functions $A_1(y)=\{A_1^j(y)\}$ and $A_2(y)=\{A_2^j(y)\}$ serve as a pair of local basis functions, reflecting the distinct characteristics of the various sub-intervals. They are piece-wise defined for each sub-interval as follows:
\begin{equation}
    \begin{pmatrix}
        A_1^j(y) \\ A_2^j(y)
    \end{pmatrix}
     = \begin{cases}
        \begin{pmatrix}
        1, \\ y
        \end{pmatrix}, & p_j = q_j = 0, \\
        \begin{pmatrix}
        \exp(y\sqrt{q_j}) \\ \exp(-y\sqrt{q_j})
        \end{pmatrix}, & p_j = 0,\ q_j \ne 0, \\
        \begin{pmatrix}
        \text{Ai}(c_h(y)p_j^{-2/3}) \\ \text{Bi}(c_h(y)p_j^{-2/3})
        \end{pmatrix}, & p_j \ne 0,\ q_j \ne 0,
    \end{cases}
    \label{eq3-4}
\end{equation}
where $c_h(y)$ is a piece-wise linear approximation of the function $c(y)$, characterized by a slope $p_j$ and an intercept $q_j$ over the $j$-th sub-interval. 
Ai($\cdot$) and Bi($\cdot$) denote the Airy functions of the first and second kind, respectively. 
We note that $u_h(y)$ given by Eq.~$\eqref{eq3-3}$ can approximate the original solution $u(y)$ with high precision for proper coefficients $\{\alpha_j, \beta_j\}_j$.

\paragraph{Two dimension and above.} 
In two dimensions, we subdivide the domain $\Omega$ into a set of quadrilateral cells $\{\Delta_j\}$ and approximate the functions $c(x,y)$ and $F(x,y)$ as piece-wise constants. Specifically, within each cell $\Delta_j$, we approximate $c(x,y)$ and $F(x,y)$ by the constants $c^j_0$ and $F^j_0$, respectively. The solution to Eq.~\eqref{eq3-2} in this local region can then be approximated by:
\begin{equation}\label{eq3-5}
    u_h(x,y) = \frac{F^j_0}{c^j_0} + c_{j1}e^{\mu_j x} + c_{j2}e^{-\mu_j x} + c_{j3}e^{\mu_j y} +c_{j4}e^{-\mu_j y},
\end{equation}
where $\mu_j=\sqrt{c^j_0}$.
This expression can be straightforwardly extended to higher dimensions. Notably, $u_h(x,y)$ given by Eq.~$\eqref{eq3-5}$ can approximate the original solution $u(x,y)$ with high precision for proper coefficients $\{c_{j1}, c_{j2}, c_{j3}, c_{j4}\}_j$.

\section{Methods}
\paragraph{Definition of PI-TFPONet.}

Standard operator networks, such as DeepONet and FNO, learn the mapping $f\mapsto u$ using large datasets of paired $(f,u)$ values, equation information, or both. However, the jump conditions in the solution pose significant challenges for operator learning. In this paper, rather than learning the operator $\mathcal{G}: f \rightarrow u$ directly, we learn the mapping from $f$ to the discrete coefficients $\{\alpha_j,\beta_j\}_j$ in Eq.~\eqref{eq3-3} or $\{c_{j1},c_{j2},c_{j3},c_{j4}\}_j$ in Eq.~\eqref{eq3-5}. We then apply Eq.~\eqref{eq3-3} or Eq.~\eqref{eq3-5} to reconstruct the solution in each local region, denoting the reconstructed solution as $u_\theta$, as shown in the architecture in Fig.~\ref{fig:architecture}.

\begin{figure}[h]
	\centering
	\includegraphics[width=0.9\columnwidth]{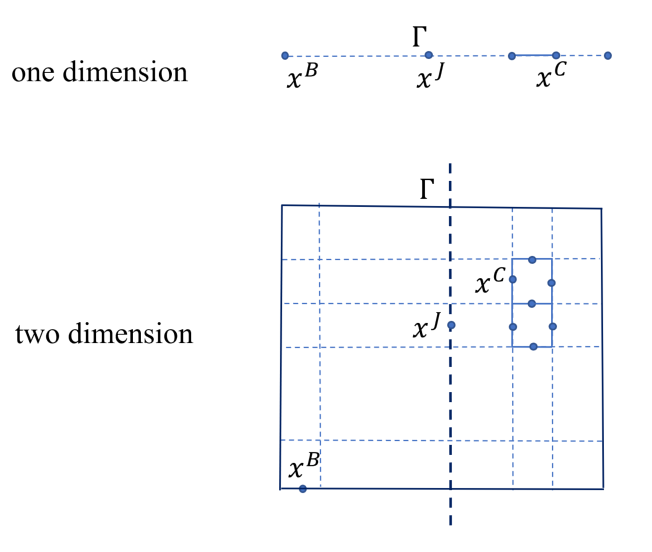}
	\caption{Illustration of the point sets $X^C$, $X^B$, and $X^J$.}
	\label{fig_local_mesh}
\end{figure}

\paragraph{Network Architecture.} 
We use a neural network to learn the mapping from $f(x)$ to the discrete coefficients. The input is the discrete vector $[f(\bm{x}_i)]_i$ and the output is the vectorized coefficients. For the one-dimensional case, we use a forward neural network, and for the two-dimensional case, we employ convolutional neural networks.

\paragraph{Loss Function.}

Since $u_\theta$ is defined piece-wise on $\{\Delta_i\}_i$ and adheres to a form like Eq.~\eqref{eq3-3} or Eq.~\eqref{eq3-5} within each cell, we must ensure $u_\theta$ is consistent at the shared edges (or points) of adjacent cells. We avoid using the equation residual in the loss function due to its known training challenges \cite{wang2022PINNfail}. We will later demonstrate that $u_\theta$ can approximate the solution accurately even without supervised data and equation residuals as constraints. Instead, we design an unsupervised loss function by enforcing continuity, boundary, and jump conditions. Specifically, the total loss function is:
\begin{equation}\label{eq3-6}
    \mathcal{L}(\theta) = \gamma_C\mathcal{L}_{C}(\theta) + \gamma_B\mathcal{L}_{B}(\theta) + \gamma_J\mathcal{L}_{J}(\theta),
\end{equation}
with $\gamma_C,\ \gamma_B,\ \gamma_J$ as penalizing parameters. The three components of the loss are:
\begin{equation}\label{eq3-7}
\left\{\begin{aligned}
    \mathcal{L}_{C}(\theta) =& \sum_{\bm{x} \in X^C}\left([u_{\theta}]|_{\bm{x}}\right)^2 + \left([\nabla u_{\theta}\cdot \bm{n}]|_{\bm{x}}\right)^2,\\
    \mathcal{L}_{B}(\theta) =& \sum_{\bm{x} \in X^B}\left(u_{\theta}(\bm{x}) - 0\right)^2,\\
    \mathcal{L}_{J}(\theta) =& \sum_{\bm{x} \in X^J}\left([u_{\theta}]|_{\bm{x}} - g_D(\bm{x})\right)^2 \\
    &+ \sum_{\bm{x} \in X^J}\left([\nabla u_{\theta}\cdot \bm{n}]|_{\bm{x}} - g_N(\bm{x})\right)^2.
	\end{aligned}
\right.
\end{equation}

Here, $X^C$ represents the set of points on the common edge of adjacent cells, excluding those on interfaces and boundaries. $X^J$ represents the set of interface points, while $X^B$ refers to the boundary points. Figure~\ref{fig_local_mesh} provides examples of $X^C$, $X^B$, and $X^J$ in one and two dimensions. The notation $[\cdot]|_{\bm{x}}$ denotes the jump across interfaces or common edges at $\bm{x}$, as defined in Eq.~\eqref{equ:jump definition}. This jump can be easily computed due to the piece-wise definition of $u_\theta$ on $\{\Delta_i\}_i$.

% with
% \begin{equation}\label{eq3-7}
% \left\{\begin{aligned}
%     &\mathcal{L}_{C}(\theta) = \sum_{i=1,y_i\neq x_0}^{N-1}\left|u_{\theta}(y_i^+)-u_{\theta}(y_i^-)\right|^2,\\
%     &\qquad\qquad + \sum_{i=1,y_i\neq x_0}^{N-1}\left|u'_{\theta}(y_i^+)-u'_{\theta}(y_i^-)\right|^2, \\
%     &\mathcal{L}_{B}(\theta) =\left|u_{\theta}(y_0)-0\right|^2 +\left|u_{\theta}(y_N)-0\right|^2,\\
%     &\mathcal{L}_{J}(\theta) =\left|u_{\theta}(x_0^+)-u_{\theta}(x_0^-)-g_D(x_0)\right|^2  \\
%     &\qquad\qquad+ \left|u_{\theta}'(x_0^+)-u_{\theta}'(x_0^-)-g_N(x_0)\right|^2.
% 	\end{aligned}
% 	\right.
% \end{equation}
% where $u_{\theta}(y)$ is given by Eq.~\eqref{eq3-3}
% \begin{align*}
%     u_{\theta}(y_j^-) \;=\; & \alpha_j(\theta) A_1^j(y_j) + \beta_j(\theta) A_2^j(y_j) \\ 
%     & + \int_{y_{j-1}}^{y_j} F(s)G_j(y(x),s)ds, \\
%     u_{\theta}(y_j^+) \;=\; & \alpha_{j+1}(\theta) A_1^{j+1}(y_j) + \beta_{j+1}(\theta) A_2^{j+1}(y_j) \\
%     & + \int_{y_{j}}^{y_{j+1}} F(s)G_{j+1}(y(x),s)ds,
% \end{align*}
% or given by Eq.~\eqref{eq3-5} similarly.

\paragraph{Data Assimilation}
Traditional numerical methods for interface problems cannot incorporate additional observed data $\{u(y_k)\}_{k=1}^{N_0}$. Our PI-TFPONet, being an unsupervised learning method, can seamlessly integrate with labeled data.
The supervised data loss $\mathcal{L}_D(\theta)=\frac{1}{N_0}\sum_{k=1}^{N_0}|u_{\theta}(y_k)-u(y_k)|^2$ can be naturally added to the unsupervised loss \eqref{eq3-6}.

\section{Theoretical Results}

By minimizing the unsupervised loss in Eq.~\eqref{eq3-6}, we determine the optimal neural network parameters for approximating the discrete coefficients. We then reconstruct the solution over the entire domain using Eq.~\eqref{eq3-3} or Eq.~\eqref{eq3-5}. Next, we present the error estimate for the reconstructed solution. We will consider the cases where the coefficient $a=1$ and $a=\varepsilon$, noting that other cases can be addressed through coordinate transformation.

\subsection{Convergence of PI-TFPONet}

The domain $\Omega = \bigcup\limits_{i=1}^M \Delta_i$ is partitioned into $M$ cells by $\widetilde{\Gamma}$. In the one-dimensional case, each $\Delta_i$ is an interval, and in the two-dimensional case, each $\Delta_i$ is a quadrilateral. We define the error function as $e=u_{\theta}-u$. Due to the piece-wise definition of $u_\theta$, $e$ may exhibit jumps between adjacent cells, not just at the interface $\Gamma$.

The norm is defined as:
\begin{equation*}
    \|u\|_{2, \Omega}^* = \left( \sum_{j=0}^l (\|u^{(l)}\|_{0,\Delta_1}^2+\cdots+\|u^{(l)}\|_{0,\Delta_M}^2)\right)^{1/2},
\end{equation*}
where $u^{(l)}$ denotes the $l$-th derivative of $u$, and $\|u\|_{0,\Delta_i}^2=\int_{\Delta_i}|u(\bm{x})|^2d\bm{x}$ is the standard $L^2$ norm. We then establish the following error estimate:

\begin{theorem}\label{thm:convergence}
The error estimate
\begin{equation}\label{eq4-1}
    \begin{aligned}
    \|e\|_{2, \Omega}^* &\le Ch^2\left(\|f\|_{0,\Omega}+\|g_D\|_{\infty,\Gamma}+\|g_N\|_{\infty,\Gamma}\right) \\
    &\quad + C\sqrt{\mathcal{L}_C(\theta)+\mathcal{L}_B(\theta)+\mathcal{L}_J(\theta)}
    \end{aligned}
\end{equation}
holds with a constant $C$ independent of $h$, $f$, $g_D$, and $g_N$.
\end{theorem}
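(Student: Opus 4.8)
The plan is to route the estimate through the classical tailored-finite-point (TFPM) reconstruction and then bound two pieces separately. Let $u_h$ be the piecewise function built from the \emph{same} local basis as in Eq.~\eqref{eq3-3} (resp.\ Eq.~\eqref{eq3-5}), but with coefficients $\{\alpha_j^*,\beta_j^*\}$ (resp.\ $\{c_{j1}^*,\dots\}$) chosen to solve \emph{exactly} the linear system that encodes continuity of $u_h$ and of $\nabla u_h\cdot\bm n$ across all common edges, the homogeneous boundary condition, and the prescribed jumps $g_D,g_N$ on $\Gamma$. Write
\[
e=u_\theta-u=(u_\theta-u_h)+(u_h-u)=:w+\eta ,
\]
and bound $\|\eta\|_{2,\Omega}^*$ and $\|w\|_{2,\Omega}^*$, combining by the triangle inequality.

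For the consistency term $\eta=u_h-u$: inside each cell $u$ solves $-\Delta u+cu=F$ and $u_h$ solves the same equation with $c$ (resp.\ $F$) replaced by its piecewise-linear (1D) or piecewise-constant (2D) interpolant, since the basis in Eq.~\eqref{eq3-4}/\eqref{eq3-5} is the exact homogeneous solution of the frozen-coefficient operator and the Green's-function integral is an exact particular solution. Hence the only error sources are the $O(h^2)$ truncation of $c_h-c$, $F_h-F$ and the $O(h^2)$ approximation of the interface/cell geometry. A Taylor expansion of the local representation together with the classical TFPM stability argument (cf.~\cite{huang2009tailored}) gives
\[
\|\eta\|_{2,\Omega}^* \le Ch^2\big(\|f\|_{0,\Omega}+\|g_D\|_{\infty,\Gamma}+\|g_N\|_{\infty,\Gamma}\big),
\]
with $C$ independent of $h$, and — crucially — independent of $\varepsilon$ when $a=\varepsilon$, because the exponential/Airy basis resolves the boundary and interface layers exactly rather than via polynomials.

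For the optimization term $w=u_\theta-u_h$: since $u_h$ satisfies the continuity, boundary, and jump conditions exactly, at every collocation point we have $[w]|_{\bm x}=[u_\theta]|_{\bm x}$ and $[\nabla w\cdot\bm n]|_{\bm x}=[\nabla u_\theta\cdot\bm n]|_{\bm x}$ on $X^C$, $w(\bm x)=u_\theta(\bm x)$ on $X^B$, and likewise on $X^J$. Thus $w$ lies in the finite-dimensional local-basis space and its coefficient vector $\mathbf c_w$ solves the \emph{same} TFPM linear system $L\,\mathbf c_w=\mathbf r$, where $\mathbf r$ collects exactly these defects, so $\|\mathbf r\|_2^2\le \mathcal L_C(\theta)+\mathcal L_B(\theta)+\mathcal L_J(\theta)$ (the weights $\gamma$ being absorbed into $C$). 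Invoking the well-posedness of the TFPM scheme, $\|L^{-1}\|$ bounded independently of $h$ (and of $\varepsilon$), which for the reaction–diffusion operator with $c\ge 0$ follows from a discrete maximum principle / energy estimate, yields $\|\mathbf c_w\|_2\le C\|\mathbf r\|_2$. Finally, after the standard normalization of the local basis on a cell of diameter $h$, the map $\mathbf c_w\mapsto w$ is bounded in $\|\cdot\|_{2,\Omega}^*$, so $\|w\|_{2,\Omega}^*\le C\sqrt{\mathcal L_C(\theta)+\mathcal L_B(\theta)+\mathcal L_J(\theta)}$. Adding the two bounds proves Eq.~\eqref{eq4-1}.

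The main obstacle is the stability step for $w$: establishing that $L$ is well-conditioned uniformly in $h$ and, especially, in the perturbation parameter $\varepsilon$, and converting the defect — controlled only at the finitely many points of $X^C\cup X^B\cup X^J$ — into control of $w$ as a function. The latter is immediate in 1D, where these points are the nodal points, but in 2D it requires the collocation on each common edge to be fine enough for a discrete-to-continuous norm equivalence (this is the ``collocation points sufficient'' hypothesis). The $\varepsilon$-uniformity is precisely what makes the method uniformly convergent for singularly perturbed interface problems and is the delicate part of the classical TFPM analysis that must be transported to this setting; a secondary technical point is choosing the normalization of the exponential and Airy basis functions so that both the consistency constant and the coefficient-to-norm constant stay $h$- and $\varepsilon$-independent.
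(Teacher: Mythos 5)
Your approach is structurally sound but takes a genuinely different route from the paper. You insert the classical TFPM solution $u_h$ (coefficients chosen to satisfy all continuity/boundary/jump equations exactly) and split $e = (u_\theta - u_h) + (u_h - u)$, bounding the consistency error $u_h-u$ by the known $O(h^2)$ TFPM estimate and the optimization error $u_\theta-u_h$ by a discrete stability argument on the collocation linear system $L$. The paper avoids both ingredients: since $u_\theta$ itself solves $-u_\theta''+c_h u_\theta=f$ exactly in each cell, the error $e=u_\theta-u$ directly satisfies the interface problem $-e''+c_h e=R_h$ with $R_h=(c-c_h)u$ and with jump/boundary data that \emph{are} the loss defects at $X^C$, $X^B$, $X^J$; then a single \emph{continuous} stability estimate (Lemma~\ref{lemma1}, with $\ell^2$-summed jump data on the right-hand side) applied once to $u$ (to bound $\|R_h\|_{0,\Omega}\le Ch^2(\|f\|+\|g_D\|_\infty+\|g_N\|_\infty)$) and once to $e$ gives the theorem. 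Your argument is valid modulo the discrete-stability step, but that step is weaker than you need: $\|L^{-1}\|_{\ell^2\to\ell^2}\le C$ uniformly in $h$ is delicate to justify and is not quite the right object, since one must still convert an $\ell^2$ coefficient bound into a bound in $\|\cdot\|_{2,\Omega}^*$, which introduces $h$-dependent normalization factors. A cleaner fix within your framework is to apply the paper's Lemma~\ref{lemma1} directly to $w=u_\theta-u_h$ (which is a piecewise homogeneous solution with prescribed jumps equal to the defects), sidestepping $L$ entirely; at that point your route and the paper's coalesce, with the paper's being shorter because it never needs the TFPM consistency theorem as a separate ingredient. Your remark about 2D collocation density versus continuous edge constraints is a genuine caveat, but one the paper shares (it only proves the 1D case and asserts the 2D case is analogous).
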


\paragraph{Proof sketch.} We consider the one-dimensional problem as an example. Given that $u_{\theta}(x)$ from Eq.~\eqref{eq3-3} satisfies $-\Delta u_{\theta} + c_h\cdot u_{\theta} = f$ in each piece-wise domain, subtracting it from Eq.~\eqref{eq3-2} yields the following interface problem for the error function $e(x)$:
\begin{equation}\label{eq4-2}
\left\{\begin{aligned}
&-e''(x) + c_h(x)e(x) = R_h(x), &x\in \Omega/ \Gamma,\\
&[e]|_{x} = [u_{\theta}]|_{x},\ [e']|_{x} = [u_{\theta}']|_{x},&x\in \widetilde{\Gamma}/ (\Gamma\cup \partial\Omega),\\
&[e]|_{x} = [u_{\theta}]|_{x}-g_D(x), &x\in \Gamma,\\
&[e']|_{x} = [u_{\theta}']|_{x}-g_N(x),&x\in \Gamma,\\
&e(x) = u_{\theta}(x), &x\in\partial\Omega. 
\end{aligned}\right.
\end{equation}
Here, $R_h(x) = (c(x)-c_h(x))u(x)$, and by definition of $c_h$, we have $|c(x)-c_h(x)|\leq Ch^2$. Theorem 1 follows from the standard stability estimate for interface problems (detailed in the Appendix) and the loss in Eq.~\eqref{eq3-7}.

\subsection{Uniform Convergence of PI-TFPONet for Singular Perturbation Interface Problems}

For the singular perturbation interface problem with $0<\varepsilon\ll1$,
\begin{equation}\label{eq4-4}
	\left\{\begin{aligned}
		&-\varepsilon\Delta u(\bm{x}) + c(\bm{x})u(\bm{x}) = f(\bm{x}),\ \text{in }\Omega/ \Gamma,\\
		&[u]|_{\Gamma}=g_D,\quad [\varepsilon \nabla u\cdot \bm{n}]|_{\Gamma}=g_N,\quad u|_{\partial\Omega}=0.
	\end{aligned}
	\right.
\end{equation}
We define the norm
\begin{equation*}
    \|u\|_{\varepsilon, \Omega}^* = \left( \varepsilon(\|u\|_{1, \Omega}^*)^2 + (\|u\|^*_{0, \Omega})^2\right)^{1/2},
\end{equation*}
We then have the following error estimate, with the proof provided in the Appendix:
\begin{theorem}
The error estimate
\begin{equation}\label{eq4-5}
    \begin{aligned}
    \|e\|_{\varepsilon, \Omega}^* &\le Ch^2\left(\|f\|_{0,\Omega}+\|g_D\|_{\infty,\Gamma}+\|g_N\|_{\infty,\Gamma}\right) \\
    &\quad + C\sqrt{\mathcal{L}_C(\theta)+\mathcal{L}_B(\theta)+\mathcal{L}_J(\theta)}
    \end{aligned}
\end{equation}
holds with a constant $C$ independent of $\varepsilon,h,f,g_D$, and $g_N$.
\end{theorem}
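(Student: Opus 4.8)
The plan is to mirror the structure of the proof sketch for Theorem 1, but to track the $\varepsilon$-dependence carefully so that the final constant is uniform. First I would set up the error equation exactly as in Eq.~\eqref{eq4-2}, but now for the singularly perturbed operator: since $u_\theta$ constructed from Eq.~\eqref{eq3-3} (with the appropriate local basis, e.g.\ the Airy/exponential functions corresponding to the $\varepsilon$-scaled operator) satisfies $-\varepsilon e'' + c_h e = R_h$ piecewise with $R_h = (c - c_h)u$ and $|c-c_h|\le Ch^2$, the error $e$ solves a singularly perturbed interface problem with the jump data $[u_\theta]|_x$, $[\varepsilon u_\theta']|_x$ on the internal partition $\widetilde\Gamma$, the modified jump data $[u_\theta]|_x - g_D$, $[\varepsilon u_\theta']|_x - g_N$ on $\Gamma$, and boundary data $u_\theta$ on $\partial\Omega$. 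The key analytic input is the $\varepsilon$-uniform stability estimate for this interface problem in the $\|\cdot\|_{\varepsilon,\Omega}^*$ norm: something of the form $\|e\|_{\varepsilon,\Omega}^* \le C\big(\|R_h\|_{0,\Omega} + \text{(jump terms)} + \text{(boundary terms)}\big)$ with $C$ independent of $\varepsilon$ and $h$. I would invoke this as the analog of the "standard stability estimate" cited for Theorem 1 (presumably proved in the Appendix), and the bulk of the argument is then bookkeeping.

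The key steps, in order, would be: (1) derive the error interface problem and identify the residual and all jump/boundary data in terms of the loss-defining quantities; (2) establish (or cite) the $\varepsilon$-uniform a priori stability bound for the singularly perturbed interface problem in the energy-type norm $\|\cdot\|_{\varepsilon,\Omega}^*$ — this is where one must use the weighting $\varepsilon(\|\cdot\|_{1,\Omega}^*)^2 + (\|\cdot\|_{0,\Omega}^*)^2$ precisely so that boundary-layer contributions do not blow up as $\varepsilon\to 0$; (3) bound the residual term: $\|R_h\|_{0,\Omega} \le Ch^2\|u\|_{0,\Omega} \le Ch^2\big(\|f\|_{0,\Omega} + \|g_D\|_{\infty,\Gamma} + \|g_N\|_{\infty,\Gamma}\big)$, using an $\varepsilon$-uniform a priori bound on $\|u\|_{0,\Omega}$ for the exact solution of Eq.~\eqref{eq4-4} (the maximum principle / $L^2$ energy estimate for the singularly perturbed problem gives such a bound with constant independent of $\varepsilon$, since the zeroth-order coefficient $c$ is bounded below); (4) bound the jump and boundary contributions: the sums of squared jumps of $u_\theta$ over the discrete point sets $X^C$, the modified jumps over $X^J$, and the boundary values over $X^B$ are, by definition in Eq.~\eqref{eq3-7}, exactly $\mathcal{L}_C(\theta)$, $\mathcal{L}_J(\theta)$, $\mathcal{L}_B(\theta)$; a quadrature/norm-equivalence argument on the finite grid converts the discrete sums into the continuous norms appearing in the stability estimate, contributing the $C\sqrt{\mathcal{L}_C + \mathcal{L}_B + \mathcal{L}_J}$ term; (5) combine (2)–(4) to obtain Eq.~\eqref{eq4-5} with a constant depending only on $\Omega$, $\Gamma$, and bounds on $c$, but not on $\varepsilon$, $h$, $f$, $g_D$, $g_N$.

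The main obstacle I anticipate is step (2): proving the stability estimate with a constant that is genuinely $\varepsilon$-uniform in the $\|\cdot\|_{\varepsilon,\Omega}^*$ norm. Naively applying the Theorem 1 estimate would produce constants like $1/\varepsilon$ or $1/\sqrt\varepsilon$ from the second-order term and from the Neumann-type jump conditions $[\varepsilon u_\theta'\cdot\bm n]$, which is why the norm is deliberately chosen with the $\varepsilon$-weight on the $H^1$ seminorm and no weight on the $L^2$ part. The argument has to exploit that the lower-order coefficient $c$ is bounded away from zero so that the $L^2$ part of the energy is controlled by the bilinear form uniformly, and it has to handle the interface/common-edge jump terms by a trace-type inequality whose constants are compatible with the $\varepsilon$-weighting (e.g.\ a scaled trace inequality $\|v\|_{0,\partial\Delta_i}^2 \le C(\varepsilon^{-1/2}\|v\|_{0,\Delta_i}^2 + \varepsilon^{1/2}\|v'\|_{0,\Delta_i}^2)$ balanced against the local mesh size). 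A secondary, more routine obstacle is verifying in step (4) that the discrete point sets are rich enough that the grid sums control the relevant continuous trace norms up to constants independent of $\varepsilon$ and $h$ — this is where the "local mesh size sufficiently small" hypothesis enters, and it should follow from a standard finite-dimensional norm-equivalence on each cell boundary since $u_\theta$ restricted to each cell lies in a fixed low-dimensional space.
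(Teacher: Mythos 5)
Your proposal follows essentially the same route as the paper's proof: derive the error interface problem with residual $R_h=(c-c_h)u$, invoke an $\varepsilon$-uniform stability estimate in the $\|\cdot\|_{\varepsilon,\Omega}^*$ norm (the paper's Lemma~\ref{lemma2} in the Appendix), bound $\|R_h\|_{0,\Omega}\le Ch^2\|u\|_{0,\Omega}^*$ by applying the same stability estimate to $u$ itself, and then identify the jump and boundary data of $e$ with the loss terms. One small difference worth noting: the obstacle you flag in step (2) — establishing the $\varepsilon$-uniform stability for a multi-interface problem — is handled in the paper not by an energy/trace-inequality argument from scratch, but by a linear superposition $u=\sum_i u_i$ into single-jump subproblems and a citation to the single-interface $\varepsilon$-uniform estimate of the tailored finite point method (Theorem~3.3 in \cite{huang2009tailored}); and since the appendix proof is carried out only in one dimension, the cell boundaries are points, so the quadrature/norm-equivalence step you anticipate in (4) collapses to an exact identity between the sums $\sum_i(|a_i|^2+|b_i|^2)+p_0^2+q_0^2$ in the stability lemma and the discrete loss sums $\mathcal{L}_C+\mathcal{L}_J+\mathcal{L}_B$.
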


\paragraph{Remark.} For small $\varepsilon$, boundary layers, interior layers, and corner layers with rapid transitions may arise, leading to poor approximation by vanilla operator networks. However, the above theoretical results guarantee that, provided the region $\Omega$ is finely divided and the loss function is minimized, our PI-TFPONet will produce a reconstructed solution that closely approximates the true solution at any point, not just at the grid points.

\section{Computational Results}
In this section we compares the results of our PI-TFPONet method to existing neural operators using various interface problems, including one-dimensional and two-dimensional cases, as well as scenarios with singular perturbation problems.

\paragraph{Error Metric.} To quantify the performance of our methods, we utilize the relative $L^2$ norm and relative $L^{\infty}$ norm as follows:
\begin{align}
\text{relative $L^2$ error} =  \sqrt{\frac{\sum_{i=1}^{N_r}|u_{\theta}(x_i)-u(x_i)|^2}{\sum_{i=1}^{N_r}|u(x_i)|^2}},\\
\text{relative $L^{\infty}$ error} =  \frac{\max_{i=1}^{N_r}|u_{\theta}(x_i)-u(x_i)|}{\max_{i=1}^{N_r}|u(x_i)|},
\end{align}
where $u(x)$ represents the ground truth derived from high-precision numerical methods \cite{huang2009tailored}.

\paragraph{Baselines.} Our PI-TFPONet method operates in an unsupervised manner. We compare its performance with several baseline methods, including supervised approaches such as the vanilla DeepONet \cite{lu2021learning} and interface operator networks (IONet) \cite{wu2024solving}. Additionally, we evaluate against unsupervised methods, specifically the physics-informed DeepONet \cite{wang2021learning} and the physics-informed IONet \cite{wu2024solving}.

\begin{table*}[!htb]
\centering
\scalebox{0.99}{
\begin{tabular}{ccccccc}
\toprule
Supervision & Method & 1D smooth  & 1D singular   & 1D high-contrast  & 2D interface & 2D singular\\ \midrule
supervised & DeepONet & 2.94e-01   & 2.85e-01   &  1.15e-01 & 6.41e-02  & 9.14e-02\\ 
supervised & IONet &  \textbf{1.77e-03}  &  4.71e-02  & 2.00e-02   & \textbf{5.50e-03} & 4.97e-02 \\ 
unsupervised & PI-DeepONet & 9.64e-01   &  1.01e-00  & 5.87e-01 & 7.51e-01 & 2.19e-01 \\ 
unsupervised & PI-IONet &  4.49e-03  & 7.99e-01   &  4.88e-01 &  3.89e-02 &1.46e-01 \\ \midrule
unsupervised & PI-TFPONet & 2.93e-03   & \textbf{9.61e-03}   & \textbf{4.88e-03}  & 7.64e-03&\textbf{1.07e-02}  \\ \bottomrule
\end{tabular}}
\caption{A comparison of the relative $L^{2}$ errors for different benchmarks and models.}
\label{tab_results}
\end{table*}

\paragraph{Neural Networks.} In the one-dimensional case, we employ a fully connected neural network (FNN) with 4 hidden layers, each containing 64 neurons, and ReLU activation functions. For the two-dimensional case, the input function $f$ is discretized into a matrix and processed using a convolutional neural network (CNN). This CNN features an encoder-decoder architecture with a latent vector of 256 dimensions, and both the encoder and decoder consist of 4 convolutional layers. Detailed network architecture is provided in the appendix.

\paragraph{Training Details.} We utilize the AdamW optimizer \cite{loshchilovdecoupled} with momentum parameters $\beta_1=0.9$ and $\beta_2=0.999$, and a weight decay of $1e-4$ to update all network parameters. The initial learning rate is set to $1e-4$ with exponential decay. The Gaussian random field (GRF) generates the source function $f(x)\sim\mathcal{G}(0,k_l(x_1,x_2))$, where the covariance kernel $k_l(x_1,x_2)=\exp(-(x_1-x_2)^2/(2l^2))$. PI-TFPONets are trained on 1000 samples of $f$ with meshes of $32\times1$ for one dimension and $16\times 16$ for two dimensions. The trained models are tested on 200 different samples of $f$ using finer meshes of $256\times1$ for one dimension and $128\times128$ for two dimensions.

\subsection{One-Dimensional Settings}
For the one-dimensional case, we will conduct experiments on the following problem:
\begin{equation}\label{eq1-1d}
	\left\{\begin{aligned}
		&-a(x) u''(x) + b(x)u(x) = f(x),\ \text{in }\Omega/ \Gamma,\\
		&[u]|_{\Gamma}=1,\quad [u']|_{\Gamma}=1, \\
  &u(0)=u(1)=0.
	\end{aligned}
	\right.
\end{equation}
Here, $\Omega=[0,1]$ and $\Gamma=\{0.5\}$. In different experiments, the coefficients $a(x)$ and $b(x)$ will take different values, causing the solution of the corresponding equation to exhibit different characteristics.

\paragraph{1D smooth.}
We first consider the simplest type of interface problem, where the solution is smooth within each sub-region and does not exhibit drastic changes. Specifically, the coefficient $a(x)=1$, and $b(x)$ is defined by the following formula:
\begin{equation*}
    b(x) = 
    \begin{cases}
        1+e^x, & 0<x<0.5, \\
        1-\ln(x+1), & 0.5< x<1.
    \end{cases}
\end{equation*}
\begin{figure}[htbp]
	\centering
	\subfloat[]{\includegraphics[width=.45\columnwidth]{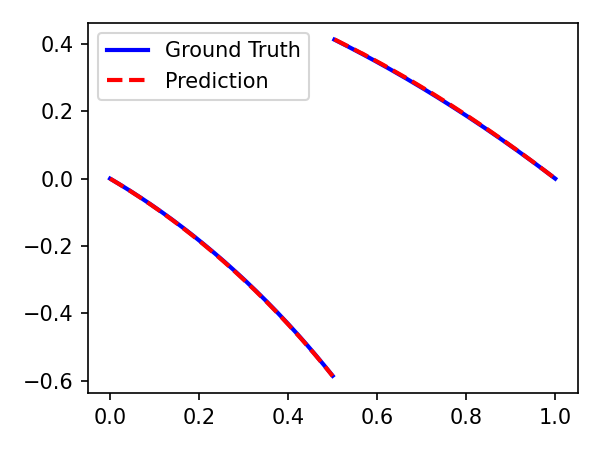}}
	\subfloat[]{\includegraphics[width=.45\columnwidth]{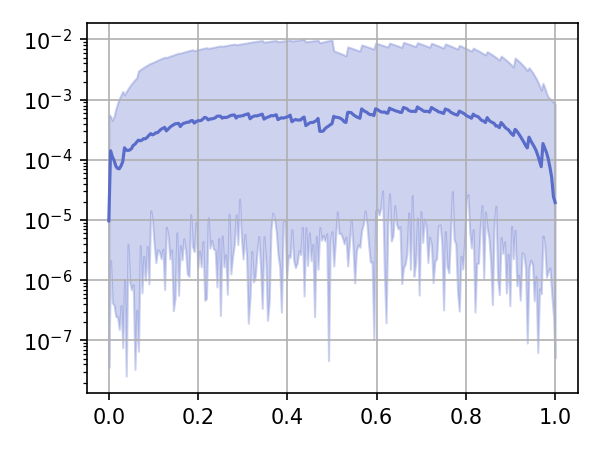}}
	\caption{[1D smooth] (a)PI-TFPONet’s refinement prediction. (b)Error distribution across 200 test examples. Solid line: median error, shaded area: min to max error range.}
 \label{fig:1d-smooth}
\end{figure}
We use 1000 $f$ samples to train PI-TFPONet on a coarse grid with a resolution of 32 and then test it using $f$ samples independent of the training set. For the test sample $f$, we use the model's output to reconstruct the solution and evaluate it on a fine grid with a resolution of 256. Fig.~\ref{fig:1d-smooth}(a) shows the predicted solution and the ground truth for a test sample, while Fig.~\ref{fig:1d-smooth}(b) displays the absolute error between the predicted solution and the ground truth on the fine grid for all 200 test samples. The point-wise errors are all smaller than $1e-02$, primarily ranging between $1e-05$ and $1e-02$, indicating that the model trained on the coarse grid can reconstruct a highly accurate solution.
\paragraph{1D singular.}Next, we consider a more complex scenario. If the coefficient $a(x)$ satisfies $0<a(x)\ll1$, the solution may develop boundary layers or inner layers at $x=0,0.5,1$. This means the solution changes rapidly in one or more very narrow regions, posing challenges for solving the problem. Here, we set $a(x)=0.001$ and define the coefficient $b(x)$ using the following piece-wise function:
\begin{equation*}
    b(x) = 
    \begin{cases}
        5, & 0<x<0.5, \\
        0.1\cdot(4+32x), & 0.5< x<1.
    \end{cases}
\end{equation*}
In scenarios with boundary layers or inner layers, traditional numerical methods, as well as vanilla PINN and DeepONet, require an increased number of sampling points on $[0,1]$ to capture the fine layer details. In contrast, we train our PI-DeepONet on a coarse grid with a resolution of 32 and test it on a fine grid with a resolution of 256. Fig.~\ref{fig:1d-singular}(a) presents a set of prediction and ground truth, showing a high degree of similarity. Fig.~\ref{fig:1d-singular}(b) illustrates the point-wise absolute error between the predictions and ground truth for 200 samples. The error values are all below $1e-02$, with most concentrated between $1e-06$ and $1e-03$. This demonstrates that our approach can effectively capture thin layer information at a relatively low computational cost.
\begin{figure}[htbp]
	\centering
	\subfloat[]{\includegraphics[width=.45\columnwidth]{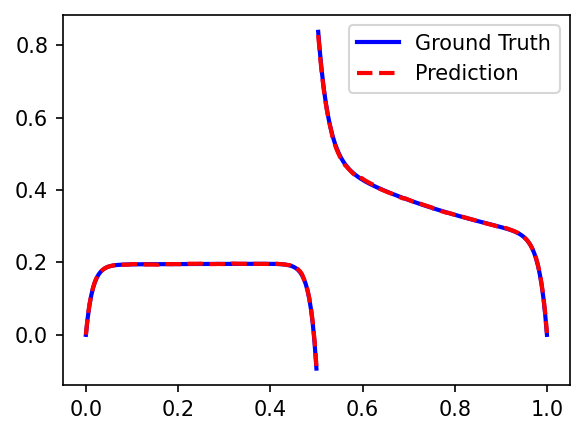}}
	\subfloat[]{\includegraphics[width=.45\columnwidth]{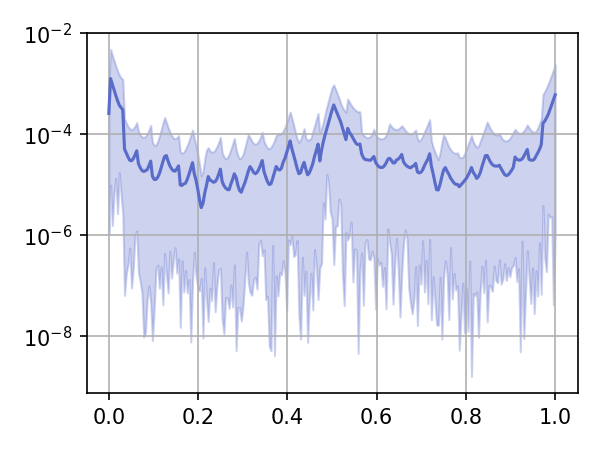}}
	\caption{[1D singular] (a)PI-TFPONet’s refinement prediction. (b)Error distribution across 200 test examples. Solid line: median error, shaded area: min to max error range.} 
 \label{fig:1d-singular}
\end{figure}
\paragraph{1D high-contrast.}
Next, we consider a problem where the coefficient $a(x)$ satisfies $0<a(x)\ll1$ in one sub-region and is moderate in another, resulting in a high contrast between the regions. This is known as a high contrast problem. Such problems exhibit boundary or inner layers, and the change in $a(x)$ introduces additional singularities at the interface. Here, we define the coefficients $a(x)$ and $b(x)$ as the following piece-wise functions:
\begin{equation*}
    a(x) = 
    \begin{cases}
        0.001, & 0<x<0.5, \\
        1, & 0.5< x<1,
    \end{cases}
\end{equation*}
\begin{equation*}
    b(x) = 
    \begin{cases}
        2x+1, & 0<x<0.5 \\
        2(1-x)+1, & 0.5< x<1.
    \end{cases}
\end{equation*}
Fig.~\ref{fig:1d high-contrast} illustrates the model's predictive performance. Subfigure (a) shows a set of reconstructed and reference solutions, which are very close. Subfigure (b) plots the point-wise absolute errors for 200 reconstructed and reference solutions on a grid with a resolution of 257. The errors are all less than $1e-01$, with most ranging between $1e-05$ and $1e-02$, indicating that our model achieves high accuracy at lower training resolutions.
\begin{figure}[htbp]
	\centering
	\subfloat[]{\includegraphics[width=.45\columnwidth]{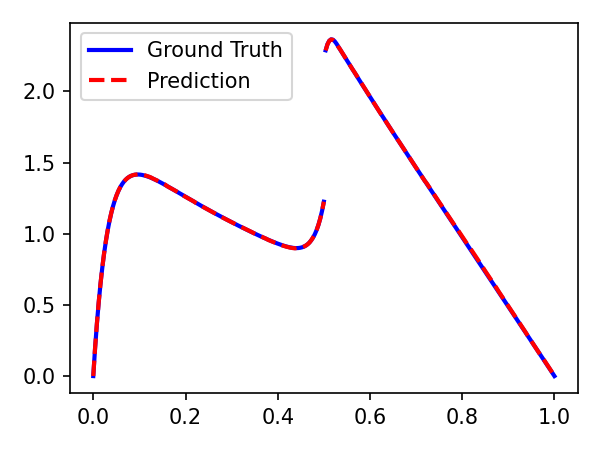}}
	\subfloat[]{\includegraphics[width=.45\columnwidth]{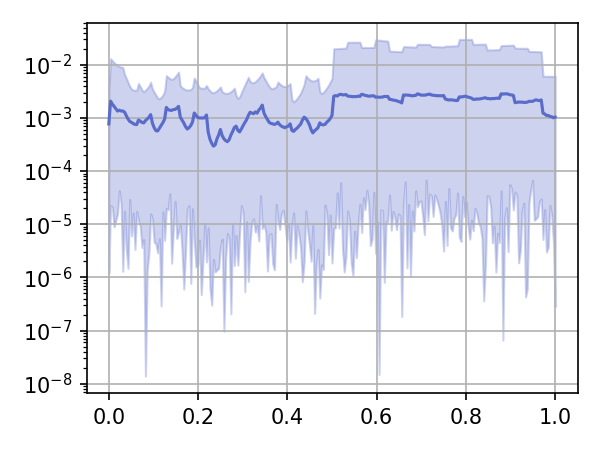}}
	\caption{[1D high-contrast] (a)PI-TFPONet’s refinement prediction. (b)Error distribution across 200 test examples. Solid line: median error, shaded area: min to max error range.} 
 \label{fig:1d high-contrast}
\end{figure}

\subsection{Two-Dimensional Settings}
For the two-dimensional case, we will perform experiments on the following problem:
\begin{equation}\label{eq6-2d}
	\left\{\begin{aligned}
		&-a(x,y)\cdot\Delta u(x,y) + b(x,y)u(x,y) = f(x,y),\ \text{in }\Omega/ \Gamma,\\
		&[u]|_{\Gamma}=1,\quad [\nabla u\cdot \bm{n}]|_{\Gamma}=0, \\
  &u|_{\partial\Omega}=g_B.
	\end{aligned}
	\right.
\end{equation}
Here, $\Omega=[0,1]\times[0,1]$, $\Gamma=\{x=0.5,0<y<1\}$ and
\begin{equation*}
    b(x,y) = 
    \begin{cases}
        16, & 0< x< 0.5,\ 0<y<1, \\
        1, & 0.5< x<1,\ 0<y<1,
    \end{cases}
\end{equation*}
and
\begin{equation*}
    g_B(x,y) = 
    \begin{cases}
        2(1-x), & 0.5< x<1,\ y=\{0,1\}, \\
        0, & \text{othwise}.
    \end{cases}
\end{equation*}
Subsequently, we will evaluate the model's performance on general interface problems as well as those with singular perturbations by varying the coefficient $a(x,y)$.

\paragraph{2D interface.}
We begin by considering the coefficient $a(x,y)=1$ as a constant, resulting in a piece-wise smooth solution over two non-intersecting sub-regions. We train PI-TFPONet with 1000 
$f$ samples on a coarse $16\times16$ grid and test it on 200 $f$ samples independent of the training set. Using the model's output, we reconstruct the solution according to formula (5) and test it on a fine $128\times128$ grid. Fig.~\ref{fig:2d_interface} presents a set of prediction and ground truth, along with the point-wise absolute errors between them. Despite the coarse training resolution, the predicted solution closely approximates the reference solution at a higher resolution, demonstrating high accuracy.

\begin{figure}[htbp]
	\centering
	\begin{minipage}{0.45\columnwidth}
		\subfloat[]{\includegraphics[width=\textwidth]{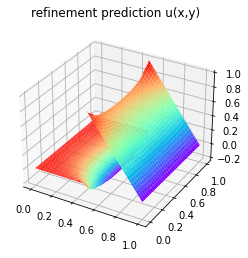}}
	\end{minipage}\quad	
	\begin{minipage}{0.45\columnwidth}
		\subfloat[]{\includegraphics[width=\textwidth]{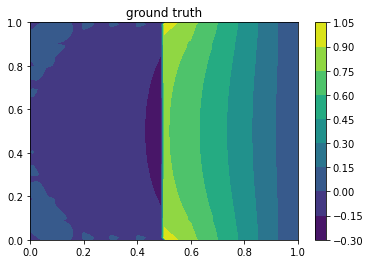}}\\
		\subfloat[]{\includegraphics[width=\textwidth]{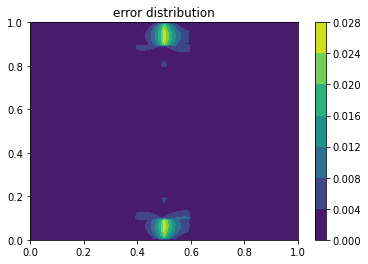}}
	\end{minipage}
	\caption{[2D interface]: (a)PI-TFPONet's refinement predicted solution. (b)ground truth solution. (c)error distribution over domain.}
 \label{fig:2d_interface}
\end{figure}

\paragraph{2D singular.}
Next, we consider $a(x,y)=0.001$, where the solution may exhibit boundary or inner layers. We train PI-TFPONet with 1000 samples of $f$ on a coarse $16\times16$ grid. Figure \ref{fig:2d_singular} presents the predicted and reference solutions, along with the point-wise absolute error between them. The error does not exceed $3.5e-02$, even in the boundary layer region. Despite being trained at a lower resolution, these results demonstrate that our model possesses sufficient prior information to generalize well at a resolution eight times higher.

\begin{figure}[htbp]
	\centering
	\begin{minipage}{0.45\columnwidth}
		\subfloat[]{\includegraphics[width=\textwidth]{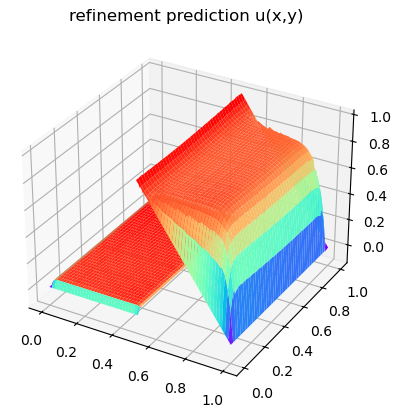}}
	\end{minipage}\quad	
	\begin{minipage}{0.45\columnwidth}
		\subfloat[]{\includegraphics[width=\textwidth]{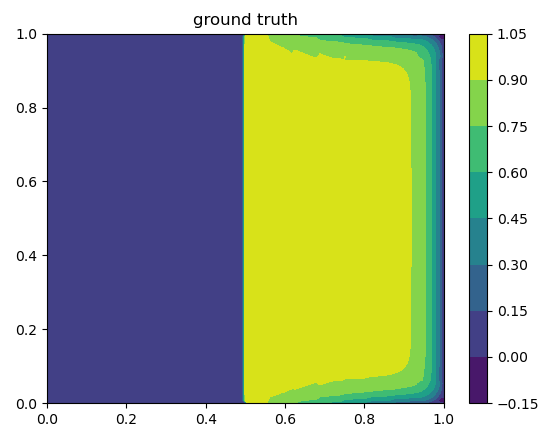}}\\
		\subfloat[]{\includegraphics[width=\textwidth]{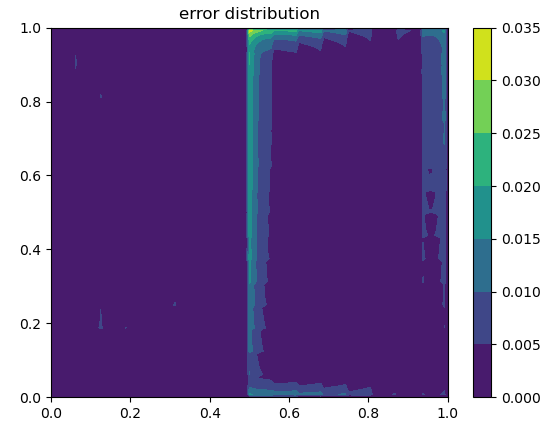}}
	\end{minipage}
	\caption{[2D singular]: (a)PI-TFPONet's refinement predicted solution. (b)ground truth solution. (c)error distribution over domain.}
 \label{fig:2d_singular}
\end{figure}

\subsection{Results}
Table~\ref{tab_results} presents the relative $L^2$ error of our model and the baseline models on the test set for all the aforementioned problems. Additional relative $L^{\infty}$ error data can be found in the appendix. The bold numbers in each column indicate the smallest error among all models tested.

For interface problems with additional singularities, including the 1D singular, 1D high-contrast, and 2D singular cases, our model achieves the highest accuracy, surpassing even those trained with supervised data. Models like DeepONet and IONet struggle to capture boundary or inner layer information, likely due to insufficient resolution in the training samples. In contrast, our model, despite being trained at low resolution, maintains high accuracy at a test resolution eight times higher.

For general interface problems, our model is slightly less accurate than IONet but more accurate than other models. However, given that our model does not require additional supervised data and uses only one network, this level of accuracy is acceptable.

\section{Conclusion}

This study introduces an unsupervised PI-DeepONet for solving parameterized interface problems. The solution is reconstructed on a coarse grid by learning the coefficients of the local bases. Our theoretical analysis shows that the reconstructed solution not only satisfies the equation exactly at the predefined grid points but also approximates the exact solution with high accuracy in other regions. Numerical experiments on various problems demonstrate that our model generalizes better than current state-of-the-art models for interface problems with additional singularities, such as singular perturbations or high contrast issues. For general interface problems, our approach performs slightly below the best existing models. However, unlike these state-of-the-art models, our model operates entirely unsupervised, eliminating the need for large amounts of supervised data.

In our experiments, we chose $\Omega=\Omega_1\bigcup\Omega_2$ for illustration. While the number of sub-regions can vary, this does not affect our model's validity, so we use two sub-regions as an example. We input the source term $f$, fix the local bases, and output the coefficients of these bases. If functions $a$ or $b$ are also input along with $f$, the local bases become variable. In such cases, integrating a network inspired by meta-learning to adaptively learn the local bases is a promising direction for future research. This approach could address more complex problems where determining the local bases is not straightforward. 

\bibliography{arxiv}

\newpage
\appendix
\onecolumn

% \documentclass{article}
% \usepackage[english]{babel}

% % Set page size and margins
% % Replace `letterpaper' with `a4paper' for UK/EU standard size
% \usepackage[a4paper,top=2cm,bottom=2cm,left=3cm,right=3cm,marginparwidth=1.75cm]{geometry}
% \usepackage{amsmath}
% \usepackage{bm}
% \usepackage{amssymb}
% \usepackage{mathtools}
% \usepackage{amsthm}
% \newtheorem{theorem}{Theorem}
% \newtheorem{lemma}{Lemma}
% \usepackage{enumerate}
% \usepackage{graphicx}
% \usepackage{indentfirst}
% \usepackage{enumerate}
% \usepackage{multirow}
% \usepackage{booktabs}
% \usepackage{enumitem}
% \usepackage{subfig}
% \usepackage{float}
% \usepackage[colorlinks=true, allcolors=blue]{hyperref}
% \usepackage{cite}

% \title{Appendix}
% \date{}
% \begin{document}
% \setcounter{equation}{16}
% \maketitle

\section{Additional Methods}

Figures \ref{fig_Net1d} and \ref{fig_Net2d} illustrate the application of PI-TFPONet to one-dimensional and two-dimensional problems, respectively, and outline the problem framework. Table \ref{table:componets} provides a detailed structure of the convolutional neural network.

\begin{figure*}[!tbh]
	\centering
	\includegraphics[width=0.7\textwidth]{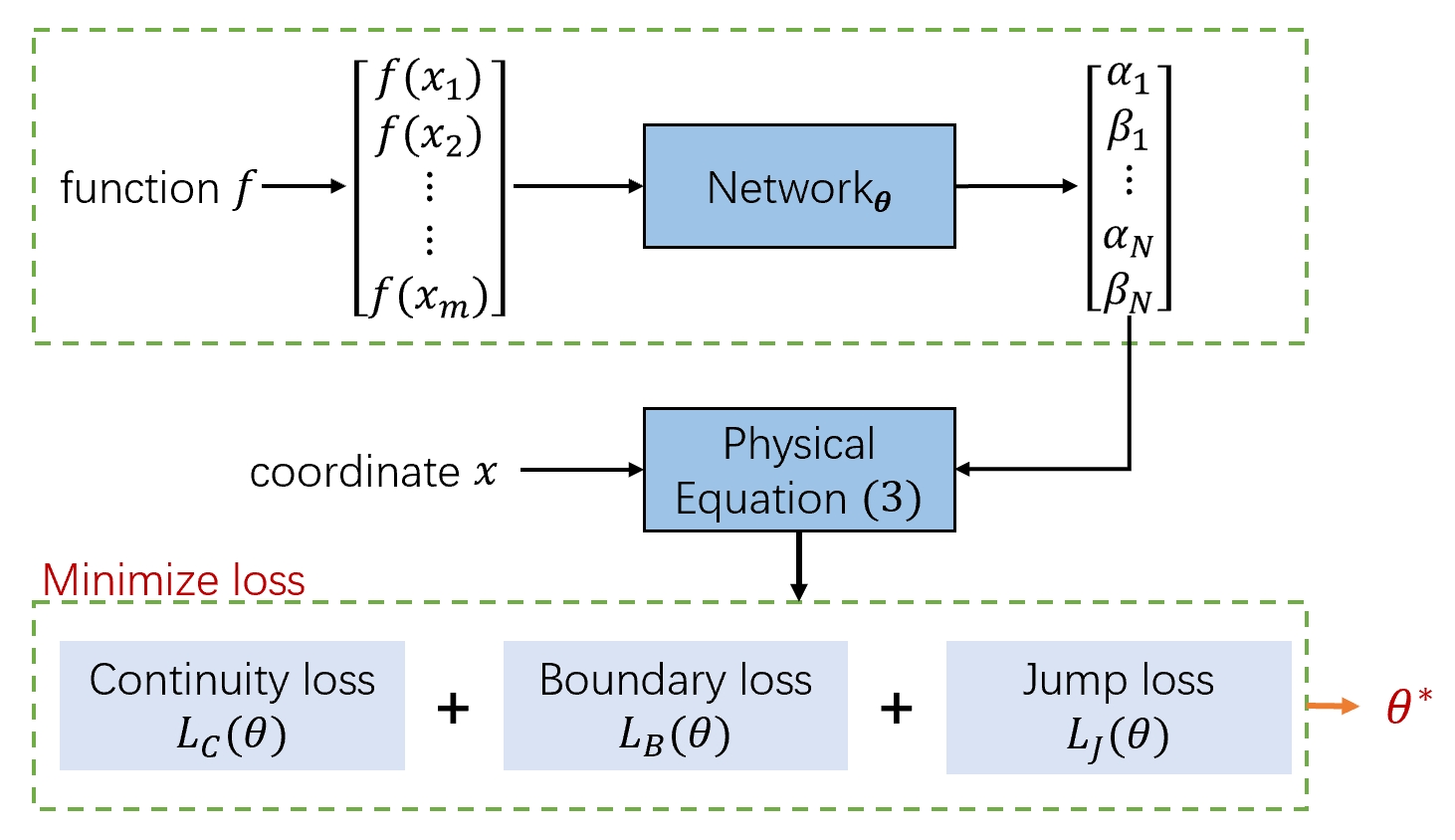}
	\caption{Architecture of PI-TFPNet for one dimensional case.}
	\label{fig_Net1d}
\end{figure*}

\begin{figure*}[!tbh]
	\centering
	\includegraphics[width=0.7\textwidth]{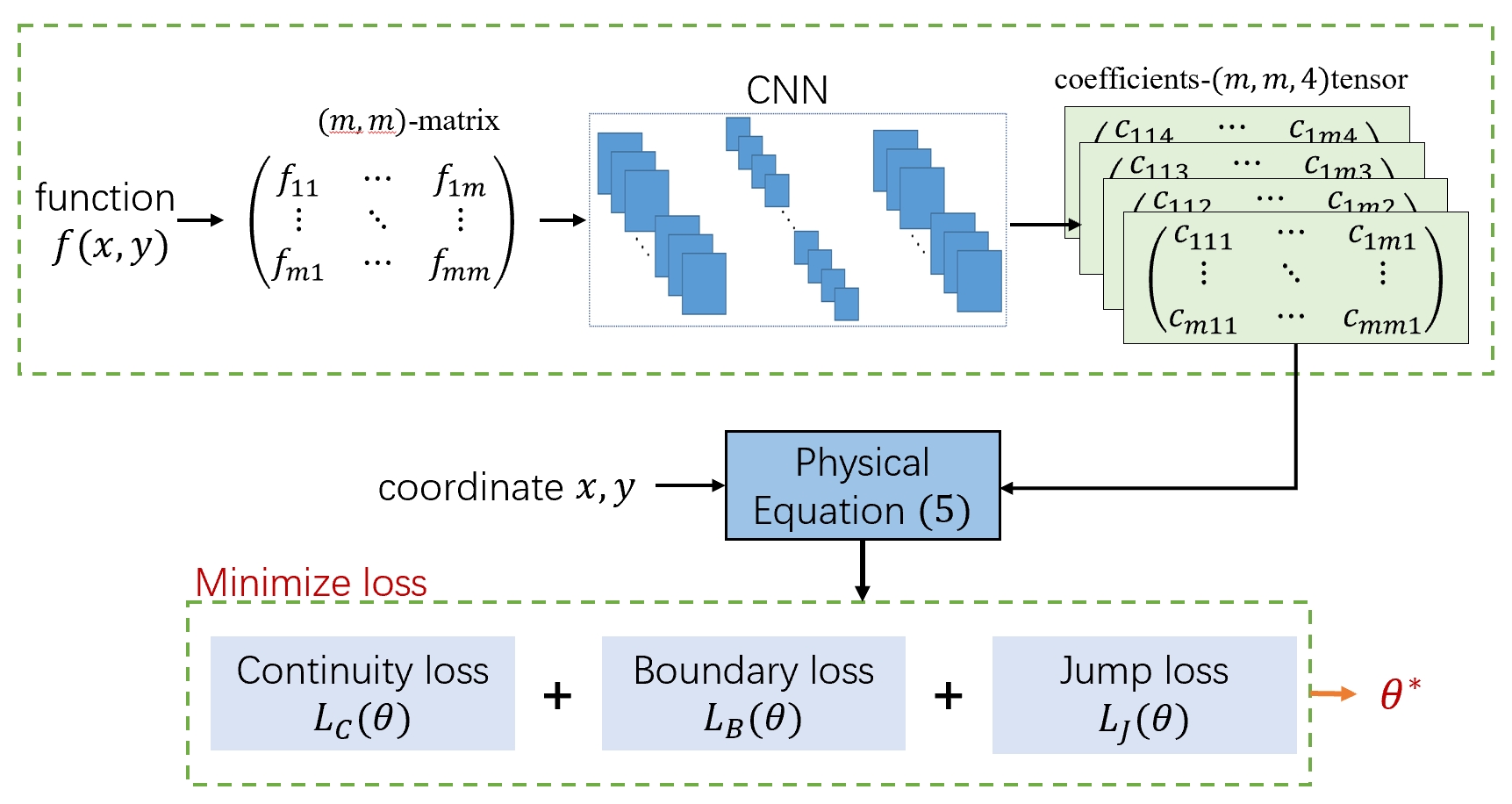}
	\caption{Architecture of PI-TFPNet for two dimensional case.  Here we use the convolutional neural network (CNN) to replace the FNN in one dimensional case.}
	\label{fig_Net2d}
\end{figure*}

\begin{table*}[h]
\setlength\tabcolsep{0.5cm} 
\centering
\begin{tabular}{c|ccc} \hline
 Component & Operations & Input Shape & Output Shape \\\hline
Encoder &  $4\times(\text{Conv2d}+\text{BN}+\text{ReLU})$ & $(1\times16\times16)$ & $(256\times1\times1)$ \\\hline
    \multirow{3}*{Decoder} & $2 \times \begin{bmatrix} \text{ConvTranspose2d}+\text{BN}+\text{ReLU} \\ \text{Conv2d}+\text{BN}+\text{ReLU} \end{bmatrix}$ & \multirow{3}*{$(256\times1\times1)$} & \multirow{3}*{$(16\times16\times4)$} \\
~ & $\text{Conv2d} + \text{BN} + \text{Tanh}$ & ~  \\
~ & Permute & ~ \\\hline
%Trunk Net & $4\times(\text{Linear}+\text{ReLU})$ & $(5)/(10)$ & $(512)$  \\\hline 
\end{tabular}
\caption{Detailed configuration of the CNN (encoder and decoder).}
\label{table:componets}
\end{table*}

\section{Additional Theoretical Results}

We provide the proof of the theorem in the main text. The proof process is essentially the same for both one-dimensional and two-dimensional problems. For simplicity and to avoid repetition, we present it in one dimension.

\subsection{Stability estimate for interface problem}

Firstly, we present the stability estimate for an interface problem with multiple interfaces. In this case, the domain $\Omega$ is partitioned by $\widetilde{\Gamma}=\{x_i\}_{i=0}^M$ into $M$ sub-regions $\{\Delta_i\}_{i=0}^{M-1}$, where each $\Delta_i$ is an interval. The function $u$, defined on $\Omega$, exhibits jumps between adjacent sub-regions. We provide the estimation of $u$ in the following lemma:

\begin{lemma}
\label{lemma1}
    Suppose $f\in L^2(\Omega)$ and $c\in L^{\infty}(\Omega)$. Let $u$ be the solution of 
\begin{equation}
    \left\{\begin{aligned}
    &-u''(x)+c(x)u(x)=f(x),\quad  x\in \Omega/ \widetilde{\Gamma},  \\
    &[u]|_{x_i} = a_i,\quad [u']|_{x_i} = b_i, \quad i\in\{1,...,M-1\},  \\
    &u(x_0) = p_0,\quad u(x_M)=q_0. 
\end{aligned}\right.
\end{equation}
    Then
\begin{equation*}
   \left(\|u\|_{2, \Omega}^*\right)^2\le C\left(\|f\|^2_{0,\Omega}+\sum\limits_{i=1}^{M-1}\left(|a_i|^2+|b_i|^2\right)+p_0^2+q_0^2\right),
\end{equation*}
    where $C$ is a constant independent of $f$, $c$, $a_i$, $b_i$, $p_0$, and $q_0$.
\end{lemma}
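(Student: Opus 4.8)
The plan is to prove the stability estimate by reducing the multi-interface problem to a superposition of (i) a problem with homogeneous jump and boundary data but nonzero right-hand side $f$, and (ii) a family of problems with zero $f$ but a single nonzero jump or boundary datum, then estimate each piece and combine via the triangle inequality. Concretely, I would first construct an explicit piecewise-smooth ``lifting'' function $w$ that absorbs all the inhomogeneous data: on each subinterval $\Delta_i$ let $w$ be a low-degree polynomial (linear or quadratic suffices) chosen so that $[w]|_{x_i}=a_i$, $[w']|_{x_i}=b_i$ for all interior nodes and $w(x_0)=p_0$, $w(x_M)=q_0$. Such a $w$ exists, can be built node-by-node marching from $x_0$ to $x_M$, and by construction satisfies, on each $\Delta_i$ with $h_i=|\Delta_i|\le h$,
\begin{equation*}
\|w\|_{2,\Delta_i}^\ast \le C\Bigl(\sum_{j=1}^{M-1}(|a_j|+|b_j|)+|p_0|+|q_0|\Bigr),
\end{equation*}
where $C$ depends only on the mesh (uniform under a quasi-uniformity/bounded-ratio assumption on $\widetilde\Gamma$). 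Then $v:=u-w$ solves the \emph{continuous} problem $-v''+cv = f - (-w''+cw) =: \tilde f$ on $\Omega\setminus\widetilde\Gamma$ with $[v]|_{x_i}=[v']|_{x_i}=0$ and $v(x_0)=v(x_M)=0$; the vanishing zeroth- and first-order jumps mean $v\in H^1_0(\Omega)$ and $v''\in L^2$ across all nodes, so $v$ is genuinely a solution of a standard two-point boundary value problem on $\Omega$, not just piecewise.

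Next I would estimate $v$ by the classical energy method: multiply $-v''+cv=\tilde f$ by $v$, integrate over $\Omega$, integrate by parts (no boundary terms since $v\in H^1_0$), and use $c\ge 0$ together with the Poincaré inequality to get $\|v\|_{1,\Omega}\le C\|\tilde f\|_{0,\Omega}$. Then I would bootstrap: from $v'' = cv - \tilde f$ and $c\in L^\infty$ we get $\|v''\|_{0,\Omega}\le \|c\|_\infty\|v\|_{0,\Omega}+\|\tilde f\|_{0,\Omega}\le C\|\tilde f\|_{0,\Omega}$, hence $\|v\|_{2,\Omega}\le C\|\tilde f\|_{0,\Omega}$. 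Finally, $\|\tilde f\|_{0,\Omega}\le \|f\|_{0,\Omega}+\|w\|_{2,\Omega}^\ast(1+\|c\|_\infty)$, and combining $u=v+w$ with the lifting bound and $(x+y)^2\le 2x^2+2y^2$ yields
\begin{equation*}
\bigl(\|u\|_{2,\Omega}^\ast\bigr)^2 \le C\Bigl(\|f\|_{0,\Omega}^2 + \sum_{i=1}^{M-1}(|a_i|^2+|b_i|^2)+p_0^2+q_0^2\Bigr),
\end{equation*}
with $C$ independent of the data as claimed. (The $\|c\|_\infty$ dependence is harmless since it is fixed by the problem; I would state the constant as depending on $\|c\|_\infty$ and the mesh regularity but not on $f,a_i,b_i,p_0,q_0$.)

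The main obstacle I anticipate is controlling the dependence of the lifting function $w$ on the mesh and making sure the constant $C$ does not blow up as $M\to\infty$ or as individual cells shrink. A naive marching construction of $w$ could accumulate error: matching $a_i,b_i$ at each node and then propagating forward can cause $w$ and $w'$ to grow linearly in the number of nodes, which would spoil the clean bound. To avoid this I would instead define $w$ \emph{locally} — e.g. on $\Delta_i=(x_i,x_{i+1})$ set $w$ to interpolate half of the jump data symmetrically from each side, or more cleanly take $w=0$ except on a one-cell neighborhood of each node where a quadratic bump realizes the jump $(a_i,b_i)$ supported in $\Delta_{i-1}\cup\Delta_i$ — so that $\|w\|_{2,\Delta_i}^\ast$ is controlled only by the data at nodes adjacent to $\Delta_i$, and summing over cells gives $\sum_i \|w\|_{2,\Delta_i}^{\ast 2}\le C\sum_j(|a_j|^2+|b_j|^2)$ with $C$ mesh-independent (under bounded mesh-ratio). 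The boundary values $p_0,q_0$ are absorbed similarly by a single linear or bump function near each endpoint. Once the lifting is done carefully and locally, the remaining energy and bootstrap estimates are completely standard and mesh-independent, and the lemma follows.
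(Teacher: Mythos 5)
Your approach is genuinely different from the paper's. The paper decomposes $u=\sum_{i=1}^M u_i$ where $u_1$ carries the source and the first jump, each $u_i$ ($2\le i\le M-1$) carries only the jump at $x_i$ and solves the \emph{homogeneous} ODE $-u_i''+cu_i=0$, and $u_M$ carries the boundary data; each piece is then estimated by citing the single-interface stability result (Lemma~2.1 of the tailored-finite-point paper) and standard elliptic regularity, and the bounds are summed. You instead construct an explicit algebraic lifting $w$ that absorbs all the jump and boundary data, reduce to a jump-free problem for $v=u-w$, and estimate $v$ by the classical energy method plus the bootstrap $v''=cv-\tilde f$. This is a clean and self-contained alternative in spirit, but there is a genuine quantitative flaw in the lifting step.

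The claim that the local polynomial-bump lifting satisfies $\sum_i \|w\|_{2,\Delta_i}^{*2}\le C\sum_j(|a_j|^2+|b_j|^2)$ with $C$ mesh-independent is false. If $w$ is a piecewise quadratic supported in $\Delta_{i-1}\cup\Delta_i$ (cell size $\sim h$) realizing the jump $(a_i,b_i)$ at $x_i$, the quadratic coefficients must be of size $|a_i|/h^2+|b_i|/h$, so $\|w''\|^2_{0,\Delta_{i-1}\cup\Delta_i}\sim |a_i|^2/h^3+|b_i|^2/h$, and thus $\|w\|_{2,\Omega}^{*2}\gtrsim h^{-3}\sum_j(|a_j|^2+|b_j|^2)$. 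This is not a matter of mesh ratio: it is intrinsic to any polynomial bump localized at scale $h$, because forcing a jump of size $a$ over a support of size $h$ with zero trace at the ends requires curvature $\sim a/h^2$. Your earlier marching construction with piecewise-linear $w$ avoids the $h^{-3}$ blow-up (since $w''\equiv 0$ there), but accumulates the jump data and yields a factor of order $M$; the local-bump refinement you propose to fix the accumulation in fact makes things worse, not better. The paper sidesteps this entirely because each $u_i$ in its decomposition solves the homogeneous ODE, so $u_i''=cu_i$ and the second derivative is controlled by $\|u_i\|_0$, not by $h$. Since the lemma only asserts that $C$ is independent of the data (not of the mesh), your argument still technically establishes the stated inequality, but the intermediate mesh-independence claim is incorrect, and the $h^{-3}$ loss is material: Theorem~1 feeds this stability constant into the error bound, where the $h$-dependence is precisely the point.
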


\begin{proof}
We decompose $u$ into $u=\sum\limits_{i=1}^M u_i$, where each $u_i$ satisfies the following equations:
\begin{itemize}
    \item For $u_1$:
    \begin{equation*}
\left\{\begin{array}{lr}
-u_1''(x)+c(x)u_1(x)=f(x), & x\in \Omega/ \widetilde{\Gamma}, \\
u_1|_{\partial \Omega}=0, & \\
\left[u_1\right] |_{x_1}=a_1,\quad [u_1'] |_{x_1}=b_1. &
\end{array}\right.
\end{equation*}
\item For $u_i$ where $2 \leq i \leq M-1$:
\begin{equation*}
\left\{\begin{array}{lr}
-u_i''(x)+c(x)u_i(x)=0, & x\in \Omega/ \widetilde{\Gamma}, \\
u_i|_{\partial \Omega}=0, & \\
\left[u_i\right] |_{x_i}=a_i,\quad [u_i'] |_{x_i}=b_i. &
\end{array}\right.
\end{equation*}
\item For $u_M $:
\begin{equation*}
\left\{\begin{array}{lr}
-u_M''(x)+c(x)u_M(x)=0, & x\in \Omega, \\
u_M(x_0)=p_0,\quad u_M(x_M)=q_0. &
\end{array}\right.
\end{equation*}
\end{itemize}

From Lemma 2.1 in \cite{huang2009tailored}, we have:
\begin{equation*}
\left(\|u_1\|_{2, \Omega}^*\right)^2\le C\left(\|f\|^2_{0,\Omega}+|a_1|^2+|b_1|^2\right).\\
\end{equation*}
For $u_i$ where $2 \leq i \leq M-1$:
\begin{equation*}
\left(\|u_i\|_{2, \Omega}^*\right)^2\le C\left(|a_i|^2+|b_i|^2\right).
\end{equation*}
Applying the standard stability estimate for elliptic problems from \cite{evans2022partial}, we obtain:
\begin{equation*}
\left(\|u_M\|_{2, \Omega}^*\right)^2\le C\left(|p_0|^2+|q_0|^2\right).
\end{equation*}
Combining these results yields the desired estimate.
\end{proof}

\subsection{Proof of Theorem 1}

For $x \in \Delta_i$, we have
\begin{equation*}
    u_{\theta}(x) = \alpha_i(\theta) A_1^i(x) + \beta_i(\theta) A_2^i(x) + \int_{x_{i-1}}^{x_i}f(s)G_i(x,s)ds.
\end{equation*}
Given the definition of the local basis functions, $u_{\theta}(x)$ satisfies the following interface problem:
\begin{equation}
\left\{\begin{aligned}
&-\Delta u_{\theta}(x) + c_h(x)u_{\theta}(x) = f(x), \quad x\in \Omega/\widetilde{\Gamma},\\
&[u_{\theta}]|_{x_i} = u_{\theta}(x_i^+)-u_{\theta}(x_i^-),\quad 1\leq i \leq M-1,\\
&[u_{\theta}']|_{x_i} = u'_{\theta}(x_i^+)-u'_{\theta}(x_i^-),\quad 1\leq i \leq M-1, \\
&u_{\theta}(x_0) = u_{\theta}(x_0),\qquad  u_{\theta}(x_N) = u_{\theta}(x_N). 
\end{aligned}\right.
\end{equation}
Here, $c_h(x)$ is the piece-wise linear approximation of $c(x)$. 

Subtracting the above equation from Eq.~(3) in the main text gives the following interface problem for the error function $e(x)$:
\begin{equation}
\left\{\begin{aligned}
&-\Delta e(x) + c_h(x)e(x) = R_h(x), &x\in \Omega/ \Gamma,\\
&[e]|_{x} = [u_{\theta}]|_{x},\ [e']|_{x} = [u_{\theta}']|_{x},&x\in \widetilde{\Gamma}/ (\Gamma\cup \partial\Omega),\\
&[e]|_{x} = [u_{\theta}]|_{x}-g_D(x), &x\in \Gamma,\\
&[e']|_{x} = [u_{\theta}']|_{x}-g_N(x),&x\in \Gamma,\\
&e(x) = u_{\theta}(x), &x\in\partial\Omega. 
\end{aligned}\right.
\end{equation}
Here, $R_h(x) = (c(x) - c_h(x)) u(x)$. Since $|c(x) - c_h(x)| \leq Ch^2$, it follows that $\|R_h\|_{0, \Omega} \leq Ch^2 \|u\|^*_{0, \Omega}$.

By applying Lemma 1, we have:
\begin{equation*}
    \|u\|^*_{0, \Omega} \leq C \left(\|f\|_{0, \Omega} + \|g_D\|_{\infty, \Gamma} + \|g_N\|_{\infty, \Gamma}\right),
\end{equation*}
which implies:
\begin{equation*}
    \|R_h\|_{0, \Omega} \leq Ch^2 \left(\|f\|_{0, \Omega} + \|g_D\|_{\infty, \Gamma} + \|g_N\|_{\infty, \Gamma}\right).
\end{equation*}

Using Lemma 1 again for the error function $e(x)$, and combining this with the loss function from Eq.~(8) in the main text, we obtain Theorem 1.

\subsection{Stability Estimate for Singular Perturbation Interface Problem}

Next, we provide the stability estimate for a singular perturbation interface problem involving multiple interfaces.

\begin{lemma}
\label{lemma2}
    Suppose $f\in L^2(\Omega)$ and $c\in L^{\infty}(\Omega)$. Let $u$ be the solution of 
\begin{equation}
    \left\{\begin{aligned}
    &-\varepsilon u''(x)+c(x)u(x)=f(x),\quad  x\in \Omega/ \widetilde{\Gamma},  \\
    &[u]|_{x_i} = a_i,\quad [\varepsilon u']|_{x_i} = b_i, \quad i\in\{1,...,M-1\},  \\
    &u(x_0) = p_0,\quad u(x_M)=q_0. 
\end{aligned}\right.
\end{equation}
    Then
\begin{equation*}
   \left(\|u\|_{\varepsilon, \Omega}^*\right)^2\le C\left(\|f\|^2_{0,\Omega}+\sum\limits_{i=1}^{M-1}\left(|a_i|^2+|b_i|^2\right)+p_0^2+q_0^2\right),
\end{equation*}
    where $C$ is a constant independent of $f$, $c$, $a_i$, $b_i$, $p_0$, and $q_0$.
\end{lemma}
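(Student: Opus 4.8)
\textbf{Proof proposal for Lemma~\ref{lemma2}.}
The plan is to mimic the decomposition argument used for Lemma~\ref{lemma1}, but to track the $\varepsilon$-dependence carefully so that the final constant $C$ is uniform in $\varepsilon$. First I would split $u = \sum_{i=1}^{M} u_i$ exactly as before: $u_1$ carries the source term $f$ together with the jump data $(a_1,b_1)$ at $x_1$ and homogeneous boundary values; each $u_i$ for $2\le i\le M-1$ carries only the jump data $(a_i,b_i)$ at $x_i$; and $u_M$ is the homogeneous-equation piece carrying the two boundary values $p_0,q_0$. By linearity it suffices to bound each $\|u_i\|^*_{\varepsilon,\Omega}$ by the corresponding right-hand side with a constant independent of $\varepsilon$, and then sum.

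The key step is the $\varepsilon$-uniform stability estimate for the single-interface problem, i.e. the singularly perturbed analogue of Lemma~2.1 in \cite{huang2009tailored}. For $u_1$ I would test the equation $-\varepsilon u_1'' + c u_1 = f$ against $u_1$ itself and integrate by parts over each subinterval, collecting the jump terms at $x_1$: this produces $\varepsilon\|u_1'\|_{0,\Omega}^2 + \int c\,u_1^2$ on the left, plus boundary/jump contributions of the form $\varepsilon\,u_1'(x_1^\pm)[u_1] + [\varepsilon u_1'](x_1) u_1(x_1^-)$, against $\int f u_1$ on the right. The terms involving the prescribed jumps $a_1 = [u_1]|_{x_1}$ and $b_1 = [\varepsilon u_1']|_{x_1}$ are handled by Cauchy–Schwarz and a trace/interpolation inequality on each subinterval; the delicate point is that the trace of $u_1$ (and of $\varepsilon^{1/2}u_1'$) on the interface must be controlled by $\|u\|^*_{\varepsilon,\Omega}$ with an $\varepsilon$-independent constant, which is exactly where the weighted norm $\|\cdot\|^*_{\varepsilon,\Omega} = (\varepsilon(\|\cdot\|^*_{1,\Omega})^2 + (\|\cdot\|^*_{0,\Omega})^2)^{1/2}$ is designed to make things work. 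Since $c\ge 0$ is only assumed bounded (not bounded below), coercivity of the zeroth-order term cannot be taken for granted, so I would instead rely on a Poincaré-type inequality on each fixed subinterval together with the boundary conditions to absorb $\int c\,u_1^2$; alternatively one can use the maximum principle for the operator $-\varepsilon\Delta + c$ to get the $L^\infty$, hence $L^2$, bound on $u$ directly and then recover the $\varepsilon$-weighted $H^1$ bound from the energy identity.

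For the pure jump pieces $u_i$, $2\le i\le M-1$, the same energy identity gives $\varepsilon\|u_i'\|_{0,\Omega}^2 + \int c u_i^2 \lesssim |a_i|^2 + |b_i|^2$ uniformly in $\varepsilon$, using that $u_i$ solves a homogeneous equation with homogeneous boundary data so its trace on $x_i$ is itself controlled by the left-hand side. For $u_M$ one uses the standard $\varepsilon$-uniform maximum-principle bound $\|u_M\|_{\infty,\Omega}\le \max(|p_0|,|q_0|)$ and then the energy identity to get the $\varepsilon$-weighted $H^1$ part. Summing the three estimates and using $M$ fixed yields the claimed bound. I expect the main obstacle to be proving the single-interface estimate with a genuinely $\varepsilon$-independent constant in the presence of a merely nonnegative, possibly vanishing, coefficient $c$: this forces one to extract the needed coercivity from the geometry (the fixed subinterval lengths plus the Dirichlet/jump data) rather than from the reaction term, and to verify that all trace inequalities invoked have $\varepsilon$-independent constants when measured in the weighted norm.
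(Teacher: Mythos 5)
Your decomposition $u=\sum_{i=1}^{M}u_i$ — one term carrying the source $f$ and the first jump, the interior terms each carrying a single pair of jump data, and a final term carrying the boundary data — is exactly the paper's decomposition, and the treatment of $u_M$ (maximum principle or direct energy identity plus Cauchy--Schwarz/Young) also matches. Where you diverge is in the single-interface building block: the paper does \emph{not} prove the $\varepsilon$-uniform stability estimate for $u_1$ and for the pure-jump pieces $u_i$; it cites it, namely the proof of Theorem~3.3, Eq.~(50), in \cite{huang2009tailored}. You attempt to derive it from scratch by an energy argument, and that derivation has a genuine gap which you half-suspect yourself.

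Concretely: after testing $-\varepsilon u_i''+cu_i=0$ against $u_i$ and collecting the interface contributions, you must absorb the term $b_i\,u_i(x_i^-)$, and for that you would need a trace bound $|u_i(x_i^\pm)|\le C\,\|u_i\|^*_{\varepsilon,\Omega}$ with $C$ independent of $\varepsilon$. No such uniform bound exists. On a fixed subinterval the sharp trace inequality gives $|u(x_i)|^2\le C\left(\|u\|_0\|u'\|_0+\|u\|_0^2\right)$, and writing $\|u\|_0\|u'\|_0=\varepsilon^{-1/2}\,\|u\|_0\cdot\varepsilon^{1/2}\|u'\|_0$ only yields $|u_i(x_i)|\le C\,\varepsilon^{-1/4}\|u_i\|^*_{\varepsilon,\Omega}$, so Young's inequality produces a term $\varepsilon^{-1/2}|b_i|^2$ that is not $\varepsilon$-uniform. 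This is not just a failure of the estimate technique: a direct computation for $-\varepsilon u''+u=0$ on $(-1,1)$ with $[u]_0=0$ and $[\varepsilon u']_0=b$ gives $\|u\|^*_{\varepsilon,\Omega}\sim|b|\,\varepsilon^{-1/4}$, so the energy-plus-trace route cannot be sharpened to an $\varepsilon$-independent constant. The weighted norm is therefore not, on its own, ``designed to make things work'' at the trace; the cited result in \cite{huang2009tailored} handles the layer near the interface by a more refined comparison/barrier argument, which is precisely the machinery your sketch is missing. Your secondary worry about coercivity when $c\ge0$ is not bounded below is also well-founded, but your proposed fixes do not close it: the Poincar\'e route reintroduces a factor $\varepsilon^{-1}$, and the maximum principle gives an $L^\infty$ bound in terms of $\|f\|_\infty$, whereas the lemma assumes only $f\in L^2(\Omega)$. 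In summary, you have correctly reproduced the decomposition and correctly located the hard step, but the self-contained energy proof you sketch of the $\varepsilon$-uniform single-interface estimate does not go through; the paper's proof is complete only modulo the external citation for exactly that step.
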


\begin{proof}
We decompose $u$ into $u=\sum\limits_{i=1}^M u_i$, where each $u_i$ satisfies the following equations:
\begin{itemize}
    \item For $u_1$:
\begin{equation*}
\left\{\begin{array}{lr}
-\varepsilon u_1''(x)+c(x)u_1(x)=f(x), & x\in \Omega/ \widetilde{\Gamma}, \\
u_1|_{\partial \Omega}=0, & \\
\left[u_1\right] |_{x_1}=a_1,\quad [\varepsilon u_1'] |_{x_1}=b_1. &
\end{array}\right.
\end{equation*}
\item For $u_i$ where $2 \leq i \leq M-1$:
\begin{equation*}
\left\{\begin{array}{lr}
-\varepsilon u_i''(x)+c(x)u_i(x)=0, & x\in \Omega/ \widetilde{\Gamma}, \\
u_i|_{\partial \Omega}=0, & \\
\left[u_i\right] |_{x_i}=a_i,\quad [\varepsilon u_i'] |_{x_i}=b_i. &
\end{array}\right.
\end{equation*}
\item For $u_M $:
\begin{equation*}
\left\{\begin{array}{lr}
-\varepsilon u_M''(x)+c(x)u_M(x)=0, & x\in \Omega, \\
u_M(x_0)=p_0,\quad u_M(x_M)=q_0. &
\end{array}\right.
\end{equation*}
\end{itemize}

From the proof of Theorem 3.3 in \cite{huang2009tailored} (Eq.~(50)), we obtain:
\begin{equation*}
\left(\|u_1\|_{\varepsilon, \Omega}^*\right)^2\le C\left(\|f\|^2_{0,D}+|a_1|^2+|b_1|^2\right).
\end{equation*}
For $u_i$ where $2 \leq i \leq M-1$:
\begin{equation*}
\left(\|u_i\|_{\varepsilon, \Omega}^*\right)^2\le C\left(|a_i|^2+|b_i|^2\right).
\end{equation*}
For $u_M$, which does not face discontinuous interfaces, we multiply its equation by $u_M$ and integrate over $\Omega$. Applying the Cauchy-Schwartz inequality and Young’s inequality, we obtain:
\begin{equation*}
\left(\|u_M\|_{\varepsilon, \Omega}^*\right)^2\le C\left(|p_0|^2+|q_0|^2\right).
\end{equation*}
Combining these results yields the desired estimate.
\end{proof}

\subsection{Proof of Theorem 2}
The proof of Theorem 2 is very similar to the proof of Theorem 1, in which we replace Lemma 1 by Lemma 2. So we omit it here for simplicity.

\section{Additional Experimental Results}

Next, we provide supplementary experimental results, including the decay of the loss function during training and the relative $L^2$ norm on the validation set. Figures \ref{fig:1d smooth}, \ref{fig:1d singular}, \ref{fig:1d high-contrast-appendix}, \ref{fig:2d interface}, and \ref{fig:2d singular} correspond to the 1D smooth, 1D singular, 1D high-contrast, 2D smooth, and 2D singular cases discussed in the main text, respectively.

\begin{figure}[H]
	\centering
	\subfloat[]{\includegraphics[width=.23\columnwidth]{fig_PI-TFPONet/1d_smooth_example.png}}
	\subfloat[]{\includegraphics[width=.23\columnwidth]{fig_PI-TFPONet/1d_smooth_errors.png}}
        \subfloat[]{\includegraphics[width=.23\columnwidth]{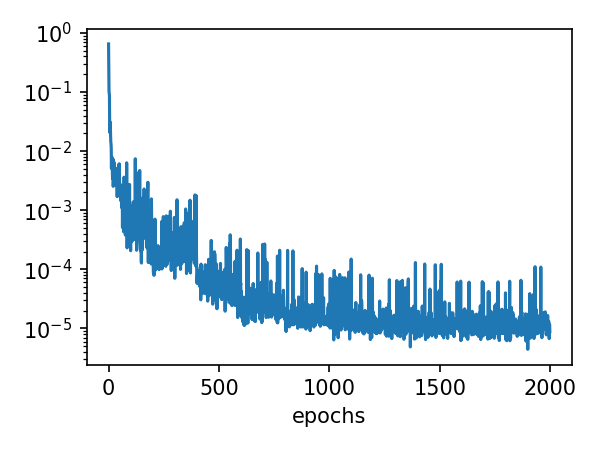}}
        \subfloat[]{\includegraphics[width=.23\columnwidth]{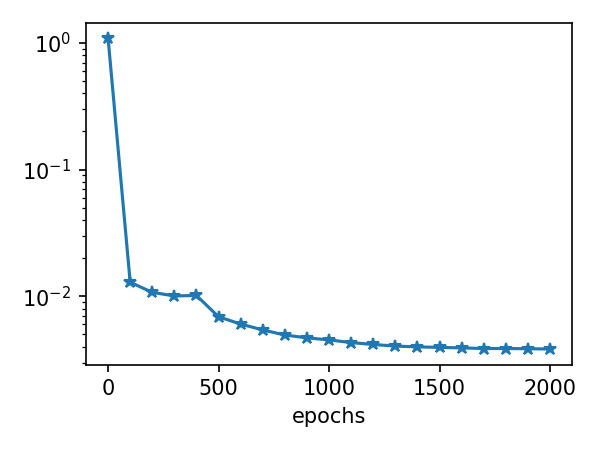}}
	\caption{[1d smooth]: (a)PI-TFPONet’s refinement predicted solution. (b)Error distribution across 200 test examples. Solid line: median error, shaded area: min to max error range. (c)PI-TFPONet’s training loss curve. (d)PI-TFPONet’s relative $L^2$ error on the validation set.}\label{fig:1d smooth}
\end{figure}

\begin{figure}[!h]
	\centering
	\subfloat[]{\includegraphics[width=.23\columnwidth]{fig_PI-TFPONet/1d_singular_example.png}}
	\subfloat[]{\includegraphics[width=.23\columnwidth]{fig_PI-TFPONet/1d_singular_errors.png}}
        \subfloat[]{\includegraphics[width=.23\columnwidth]{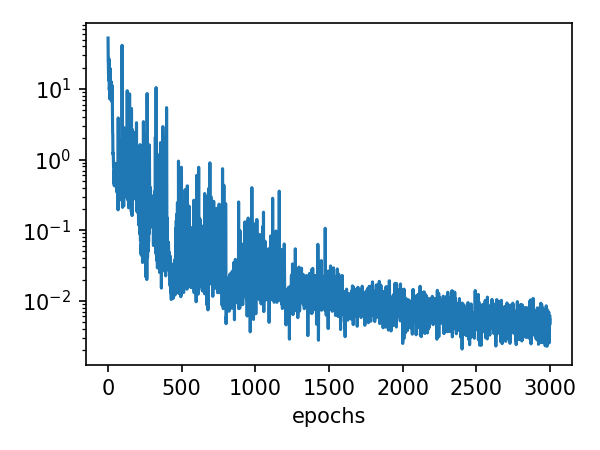}}
        \subfloat[]{\includegraphics[width=.23\columnwidth]{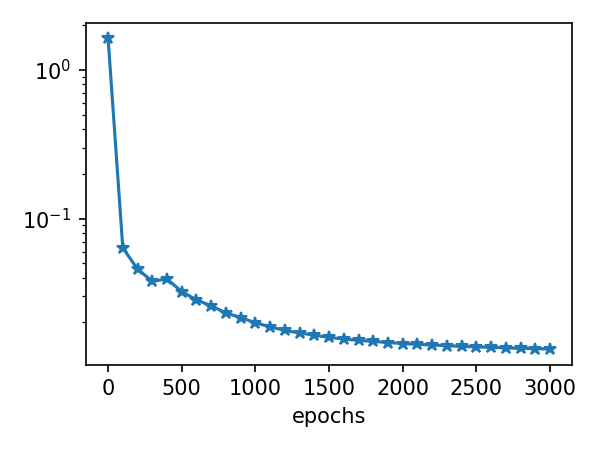}}
	\caption{[1d singular]: (a)PI-TFPONet’s refinement predicted solution. (b)Error distribution across 200 test examples. Solid line: median error, shaded area: min to max error range. (c)PI-TFPONet’s training loss curve. (d)PI-TFPONet’s relative $L^2$ error on the validation set.}\label{fig:1d singular}
\end{figure}
\begin{figure}[!h]
	\centering
	\subfloat[]{\includegraphics[width=.23\columnwidth]{fig_PI-TFPONet/1d_high_contrast_example.png}}
	\subfloat[]{\includegraphics[width=.23\columnwidth]{fig_PI-TFPONet/1d_high_contrast_errors.png}}
        \subfloat[]{\includegraphics[width=.23\columnwidth]{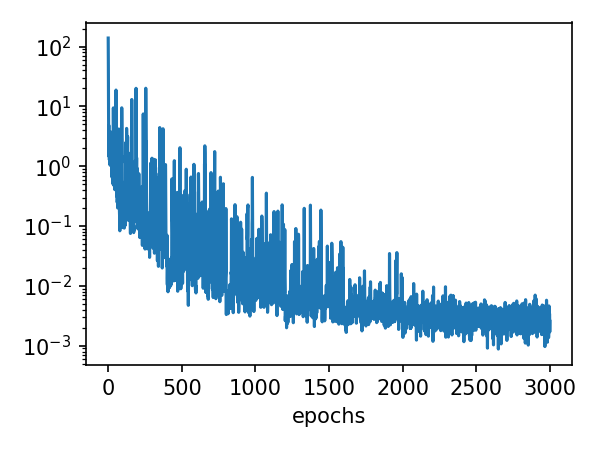}}
        \subfloat[]{\includegraphics[width=.23\columnwidth]{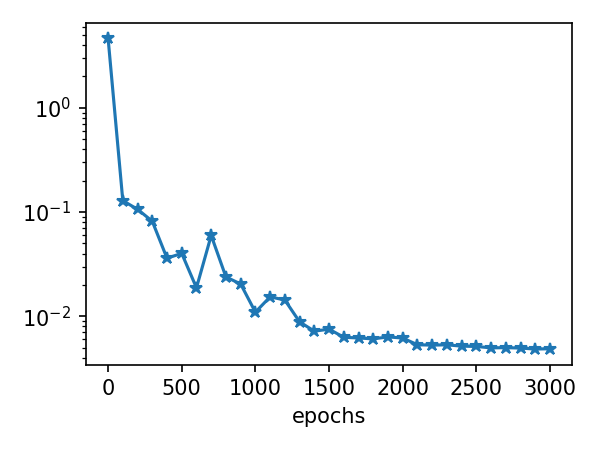}}
	\caption{[1d high-contrast]: (a)PI-TFPONet’s refinement predicted solution. (b)Error distribution across 200 test examples. Solid line: median error, shaded area: min to max error range. (c)PI-TFPONet’s training loss curve. (d)PI-TFPONet’s relative $L^2$ error on the validation set.}\label{fig:1d high-contrast-appendix}
\end{figure}
\begin{figure}[!h]
	\centering
	\begin{minipage}{0.31\textwidth}
		\subfloat[]{\includegraphics[width=\textwidth]{fig_PI-TFPONet/2d_refine.png}}
	\end{minipage}\quad	
	\begin{minipage}{0.3\textwidth}
		\subfloat[]{\includegraphics[width=\textwidth]{fig_PI-TFPONet/2d_ground.png}}\\
		\subfloat[]{\includegraphics[width=\textwidth]{fig_PI-TFPONet/2d_error.png}}
	\end{minipage}\quad
	\begin{minipage}{0.3\textwidth}
		\subfloat[]{\includegraphics[width=\textwidth]{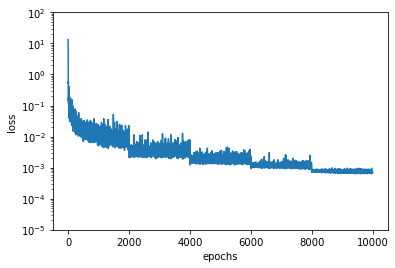}}\\
		\subfloat[]{\includegraphics[width=\textwidth]{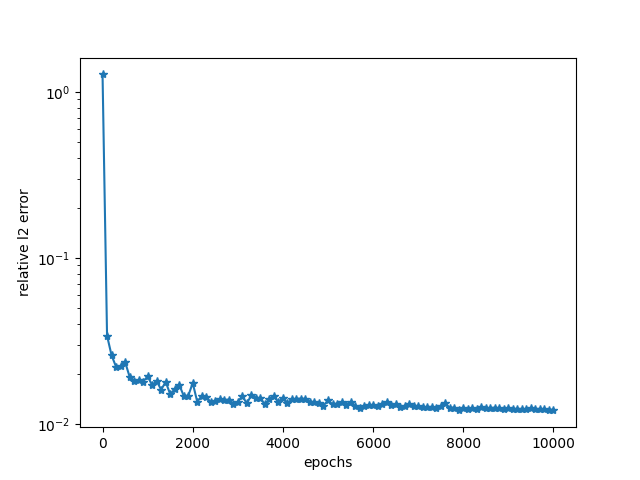}}
	\end{minipage}
	\caption{[2d interface]: (a)PI-TFPONet's refinement predicted solution. (b)ground truth solution. (c)error distribution over domain. (d)PI-TFPONet's training loss curve. (e)PI-TFPONet's relative $L^2$ error on the validation set.}\label{fig:2d interface}
\end{figure}

\newpage
\begin{figure}[H]
	\centering
	\begin{minipage}{0.31\textwidth}
		\subfloat[]{\includegraphics[width=\textwidth]{fig_PI-TFPONet/2d_singular_refine.png}}
	\end{minipage}\quad	
	\begin{minipage}{0.3\textwidth}
		\subfloat[]{\includegraphics[width=\textwidth]{fig_PI-TFPONet/2d_singular_ground.png}}\\
		\subfloat[]{\includegraphics[width=\textwidth]{fig_PI-TFPONet/2d_singular_error.png}}
	\end{minipage}\quad
	\begin{minipage}{0.3\textwidth}
		\subfloat[]{\includegraphics[width=\textwidth]{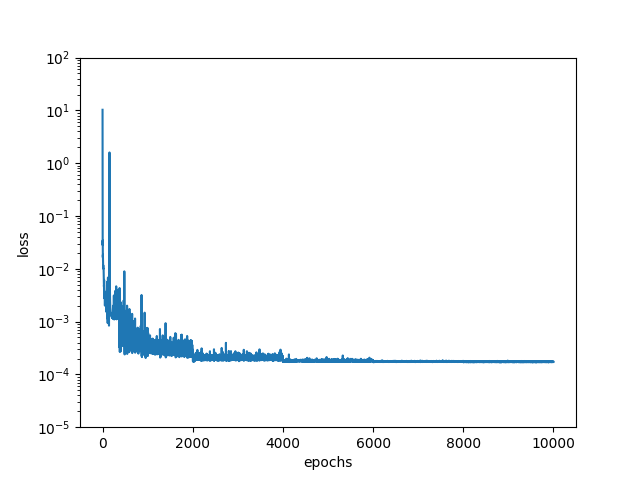}}\\
		\subfloat[]{\includegraphics[width=\textwidth]{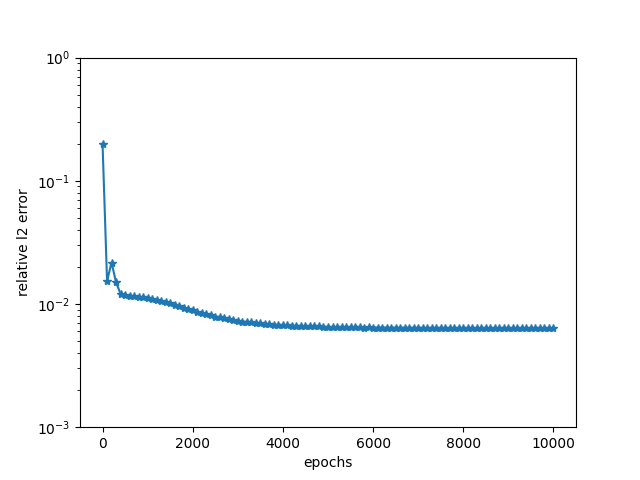}}
	\end{minipage}
	\caption{[2d singular]: (a)PI-TFPONet's refinement predicted solution. (b)ground truth solution. (c)error distribution over domain. (d)PI-TFPONet's training loss curve. (e)PI-TFPONet's relative $L^2$ error on the validation set.}\label{fig:2d singular}
\end{figure}

Finally, we present the relative $L^\infty$ error of our model and the baseline models on the test set in Table \ref{tab_results_appendix} to further demonstrate the effectiveness of our model. All models were trained using an NVIDIA GeForce RTX 3060 GPU. For interface problems with additional singularities, including the 1D singular, 1D high-contrast, and 2D singular cases, our model achieves the highest accuracy, surpassing even those trained with supervised data. For general interface problems, our model is slightly less accurate than IONet but outperforms other models. Considering that our model does not require additional supervised data and uses only a single network, this level of accuracy is reasonable.

\begin{table}[!h]
\centering
\scalebox{0.99}{
\begin{tabular}{ccccccc}
\toprule
Supervision & Method & 1d smooth  & 1d singular   & 1d high-contrast  & 2d interface & 2d singular\\ \midrule
supervised & DeepONet &  6.19e-02  & 1.04e-01   &  6.33e-02 &  2.59e-02 & 3.12e-01\\ 
supervised & IONet &   \textbf{3.62e-03} &  5.51e-02 &  2.74e-02 & \textbf{6.00e-03} & 1.55e-01 \\ 
unsupervised & PI-DeepONet &  8.90e-01  & 1.36e-00  & 1.03e-00  & 7.21e-01 & 5.28e-01 \\ 
unsupervised & PI-IONet &  8.56e-03  & 8.95e-01   &  3.80e-01 &  6.42e-02 & 3.92e-01\\ \midrule
unsupervised & PI-TFPONet & 7.31e-03   & \textbf{1.40e-02}   & \textbf{4.23e-03}  & 8.42e-03 &\textbf{6.86e-02}  \\ \bottomrule
\end{tabular}}
\caption{A comparison of the relative $L^{\infty}$ error for different interface problems.}
\label{tab_results_appendix}
\end{table}

\end{document}